\theoremstyle{plain}
\newtheorem{thm}{Theorem}[section]
\newtheorem{theorem}[thm]{Theorem}
\newtheorem{lemma}[thm]{Lemma}
\newtheorem{proposition}[thm]{Proposition}
\newtheorem{corollary}[thm]{Corollary}
\theoremstyle{definition}
\newtheorem{definition}[thm]{Definition}
\newtheorem{remark}[thm]{Remark}
\newtheorem{example}[thm]{Example}
\renewcommand\phi{\varphi}
\renewcommand\ge{\geqslant}
\renewcommand\geq{\geqslant}
\renewcommand\le{\leqslant}
\renewcommand\leq{\leqslant}
\let\tilde=\widetilde
\newcommand\be{\begin{eqnarray*}}
\newcommand\ee{\end{eqnarray*}}
\newcommand\Q{\mathbb Q}
\newcommand\R{\mathbb R}
\newcommand\C{\mathbb C}
\newcommand\Z{\mathbb Z}
\newcommand\N{\mathbb N}
\renewcommand\P{\mathbb P}
\newcommand\calo{{\mathcal O}}
\newcommand\shs{{\mathcal S}}
\newcommand\calf{{\mathcal F}}
\newcommand\newop[2]{\def#1{\mathop{\rm #2}\nolimits}}
\newop\log{log}
\newop\clconv{clconv}
\newop\cl{cl}
\newop\ord{ord}
\newop\Gal{Gal}
\newop\SL{SL}
\newop\Bl{Bl}
\newop\mult{mult}
\newop\mass{mass}
\newop\div{div}
\newop\codim{codim}
\newop\inte{int}
\newcommand\eqnref[1]{(\ref{#1})}
\newcommand\fbul{\calf_{\bullet}}
\newcommand\fmbul{\calf_{m\bullet}}
\newcommand\ebul{E_{\bullet}}
\newcommand\vbul{V_{\bullet}}
\newcommand\vmbul{V_{m\bullet}}
\newcommand\ybul{Y_{\bullet}}
\newcommand\calfbul{\calf_{\bullet}}
\DeclareMathOperator{\Val}{Val}
\newcommand\valybul{\Val_{\ybul}}
\newcommand\dybul{\Delta_{\ybul}}
\newcommand\emin{e_{\min}}
\newcommand\emax{e_{\max}}
\newcommand\nuy{\nu_{\ybul}}
\newcommand\phifbul{\varphi_{\fbul}}
\newcommand\psifbul{\psi_{\fbul}}
\newcommand\phifmbul{\varphi_{\fmbul}}
\newcommand\wtilde[1]{\widetilde{#1}}
\newcommand\wphifbul{\wtilde{\phifbul}}
\newcommand\restr[1]{\big|_{#1}}
\newcommand{\st}[1]{\ensuremath{ \left\{ #1 \right\} }}
\newcommand\eps{\varepsilon}
\newcommand{\deq}{\ensuremath{ \stackrel{\textrm{def}}{=}}}
\newcommand{\HH}[3]{\ensuremath{H^{#1}\left(#2,#3\right)}}
\newcommand{\OO}{\ensuremath{\mathcal O}}
\newcommand\lra{\longrightarrow}
\newcommand{\equ}{\ensuremath{\,=\,}}
\newcommand{\dsubseteq}{\ensuremath{\,\subseteq\,}}
\newcommand{\vol}[2]{\ensuremath{{\rm vol}_{#1}\left( #2 \right) } }
\DeclareMathOperator{\Bbig}{Big}
\DeclareMathOperator{\Jac}{Jac}
\DeclareMathOperator{\Pic}{Pic}
\newenvironment{proofof}[1]{\trivlist\item[\hskip\labelsep{\it Proof of #1.}]}{\hspace*{\fill}$\Box$\endtrivlist}
\newcommand{\ie}{{\rm i.e.\ }}
\DeclareMathOperator{\reg}{reg}
\newcommand{\e}{\varepsilon}
\newcommand{\la}{\lambda}
\begin{document}

\title{Functions on Okounkov bodies coming from geometric valuations \\  \smallskip \smallskip \small with an appendix by S\'ebastien Boucksom}

\author{Alex K\"uronya, Catriona Maclean, Tomasz Szemberg}
\date{\today}
\maketitle
\thispagestyle{empty}

\begin{abstract}
   We study topological properties of  functions on Okounkov bodies as introduced by
   Boucksom and Chen \cite{BouChe11}, and Witt-Nystr\"om \cite{Nys09} in the case when they come from geometric valuations,
   and establish their continuity over the whole Okounkov body whenever the body is
   polyhedral. At the same time,  we exhibit an example that shows that continuity along the boundary does not hold in general.
   In addition, we study formal properties of such functions and the variation of their integrals in the N\'eron--Severi space.
   An appendix by S\'ebastien Boucksom adds a general subadditivity result.
\end{abstract}


\tableofcontents

\section{Introduction}

We aim  here   to study certain  functions on Newton--Okounkov bodies associated to Cartier divisors
which arise  from geometric valuations of the function field of the underlying variety.
We  investigate their formal properties, and show how to describe them explicitly in favourable cases by explicit computations using the geometry
of the underlying varieties..

Following the pioneering work of Okounkov \cite{Ok96}, Lazarsfeld--Musta\c t\u a \cite{LazMus09} and Kaveh--Khovanskii \cite{KavKh08} showed how to associate a
convex body to a big Cartier divisor $D$  via studying the vanishing behaviour of global sections along a complete flag of subvarieties. This body was then
called the Newton--Okounkov body of the divisor, and it soon proved to be a fundamental asymptotic invariant of $D$.  Subsequent applications of the theory of
Newton--Okounkov bodies (Okounkov bodies for short) outside complex geometry include connections to representation theory  \cite{Kav11} and Schubert calculus \cite{KST11}.

Okounkov bodies can be considered as generalizations of moment polytopes in symplectic geometry; on smooth toric varieties moment polytopes are special cases
of Newton--Okounkov bodies. Philosophically speaking,  Newton--Okounkov bodies replace the volume of a divisor $\vol{X}{D}$, which is just a number,
by a convex body, thus providing it with extra structure.  Arguably the most interesting application of this theory so far is related
to the moment polytope point of view: in a recent seminal paper, Harada and Kaveh \cite{HK12} construct completely integrable systems on
certain smooth projective varieties that map onto certain Okounkov bodies.

Coming from ideas in complex analytic geometry, Witt-Nystr\"om \cite{Nys09} and Bouck\-som--Chen \cite{BouChe11} present ways to obtain continuous functions
on Okounkov bodies given a multiplicative filtration of the associated section ring. As explained by Witt-Nystr\"om in \cite{Nys10}, some of these functions are
closely related to Donaldson's test configurations \cite{Don1,Don2,RT06,Szek11} and K-stability.

In this paper we consider functions arising from filtrations that carry significant geometric information, more
specifically, we look at filtrations coming from geometric valuations of the function field.
As a rough approximation, the value of a function associated to a valuation $\nu$ at a point of the Okounkov
body is the supremum over the values of $\nu$ at sections with the same vanishing vector. The most important property of the functions associated to filtrations is that their image
measure describes the asymptotic behaviour of the jumping values of the filtration.

To be more specific, given a geometric valuation $\nu$ on a projective variety $X$ over the complex number field, we obtain a filtration $\fbul \C(X)$ of the
function field $\C(X)$ by setting
\[
 \calf_t \C(X) \deq \left\{f\in \C(X):\; \nu(f)\geq t\right\}\  \ \ \text{for $t\in\R$.}
\]
For a big Cartier divisor $D$ on $X$, this induces a multiplicative filtration on the section ring $R(X,D)=\oplus_{m=0}^{\infty}\HH{0}{X}{\OO_X(mD)}$.
This filtration has at most linear growth. By the method  of Boucksom and Chen \cite{BouChe11} or Witt-Nystr\"om \cite{Nys09}, $\fbul$ then gives rise to a
non-negative concave function
\[
 \phi_{\fbul} : \Delta_{\ybul}(D) \lra \R \ ,
\]
which we refer to as an Okounkov function on $\Delta_{\ybul}(D)$.

By concavity, these functions are always continuous  in the interior of the underlying Okounkov bodies, nevertheless, since continuous function on compact
spaces have particularly good properties, it is important to be able to control their behaviour on the boundary. Our main result concerns exactly this
question.

\begin{theorem}\label{thm:cont}
\begin{enumerate}
 \item Let $X$ be a projective variety, $Y_\bullet$ and admissible flag, $D$ a $\Q$-effective Cartier divisor on $X$, $V_\bullet$ a graded linear series
associated to $D$. Pick a geometric valuation $\nu$ of $\C(X)$. If the Newton--Okounkov body $\Delta_{\ybul}(\vbul)$ is a polytope (not necessarily rational),
then the Okounkov function $\phi_\nu:\Delta_{\ybul}(\vbul)\to\R$ is continuous
on the whole $\Delta_{\ybul}(\vbul)$.
\item On the other hand, there exists  a variety $X$, equipped with a flag $Y_1\ldots, Y_n$, and a geometric valuation on $X$, $\nu$, and an ample divisor $D$
on $X$
such that the Okounkov function $\phi_{\nu}$ on $\Delta(D)$ is not continuous.
\end{enumerate}
\end{theorem}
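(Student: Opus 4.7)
The plan is to treat the two parts independently, with convex geometry driving (1) and an explicit construction supplying (2).

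For part (1), I would first establish upper semicontinuity of $\phi_\nu$ on all of $\Delta_\ybul(\vbul)$ without using polyhedrality, via the description $\phi_\nu(x) = \sup\{t\geq 0 : x \in \Delta_\ybul(\calf_t V_\bullet)\}$, which exhibits the superlevel sets of $\phi_\nu$ as intersections of closed convex sub-Okounkov bodies. The nontrivial content is then lower semicontinuity at points $x_0$ on the boundary, and this is where polyhedrality enters. The basic tool is concavity combined with the nonnegativity of $\phi_\nu$: given a sequence $x_n \to x_0$, if we can write $x_n = (1-\lambda_n)x_0 + \lambda_n y_n$ with $y_n \in \Delta_\ybul(\vbul)$ and $\lambda_n \to 0$, then $\phi_\nu(x_n) \geq (1-\lambda_n)\phi_\nu(x_0) + \lambda_n \phi_\nu(y_n) \geq (1-\lambda_n)\phi_\nu(x_0)$, so $\liminf_n \phi_\nu(x_n) \geq \phi_\nu(x_0)$, and together with upper semicontinuity this gives continuity at $x_0$.

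The existence of such a decomposition with $\lambda_n \to 0$ is arranged by induction on the dimension of the polytope. Given $x_n \to x_0$ with $x_n \neq x_0$, I would extend each chord from $x_0$ through $x_n$ maximally within the polytope to get an endpoint $y_n$ on its boundary, so that $x_n = (1-\lambda_n)x_0 + \lambda_n y_n$. If $\lambda_n$ does not go to zero, the $y_n$ accumulate near $x_0$; since the polytope has only finitely many facets, some facet $F$ must contain infinitely many $y_n$ as well as $x_0$, and consequently infinitely many $x_n$ as well, whereupon one restricts to $F$ and invokes the inductive hypothesis. The main obstacle in this step is that the restriction of $\phi_\nu$ to a face must be identified with a bona fide Okounkov function for a related graded linear series on a subvariety, so that the inductive hypothesis actually applies; this compatibility under restriction is the most delicate piece of bookkeeping, and without polyhedrality it breaks down because the analogous chord-extension argument fails on, for example, a disc where a near-tangent chord stays bounded.

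For part (2), the strategy is to exhibit a situation in which the preceding polyhedral argument necessarily breaks down, producing a genuine discontinuity. A natural candidate is a projective surface whose Newton--Okounkov bodies have curved boundary pieces, for example a very general blow-up of $\P^2$ at many points or an abelian surface whose Okounkov bodies are known to contain circular arcs. I would choose an ample $D$ for which $\Delta_\ybul(D)$ is explicitly computable by Zariski decomposition in the spirit of Lazarsfeld--Musta\c t\u a, pick a geometric valuation $\nu$ whose associated filtration couples to the curved portion of the boundary, and verify by direct calculation that at a suitable point of the non-polyhedral boundary the sections realising the asymptotic supremum of $\phi_\nu$ from the interior fail to survive in the limit, so that $\phi_\nu$ drops strictly at that boundary point. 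The hardest part of (2) is calibrating $\nu$ and the flag so that the computation of $\phi_\nu$ can be carried out explicitly enough to certify the jump, rather than merely suggesting it.
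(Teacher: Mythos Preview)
For part~(1) your approach is correct in spirit but takes a longer road than the paper, and contains one unnecessary detour. The paper simply observes that $\phi_\nu$ is concave, non-negative, and upper-semicontinuous (the last via the closed hypograph, as you also note), and then invokes a general convex-geometry fact due to Howe: any non-negative concave upper-semicontinuous function on a locally polyhedral set is continuous. Your chord-extension argument with induction on the dimension of the polytope is essentially a direct proof of this same fact, so it works; but the step you flag as the main obstacle --- identifying the restriction of $\phi_\nu$ to a face as an Okounkov function on a subvariety --- is not needed at all. The induction should be carried out purely in convex geometry: the restriction of a non-negative concave upper-semicontinuous function to a face of a polytope is again non-negative, concave, and upper-semicontinuous on a lower-dimensional polytope, and that is all the inductive hypothesis requires. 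Trying to realise the restriction geometrically would be both difficult and irrelevant.

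For part~(2) there is a genuine gap: your proposed examples are surfaces, and this cannot work. It is a theorem (see \cite{KLM}, and the corollary immediately following the statement of Theorem~\ref{thm:cont} in the paper) that on a smooth projective surface every Okounkov body of a big divisor is a polygon, so by part~(1) the Okounkov function $\phi_\nu$ is automatically continuous. Neither blow-ups of $\P^2$ nor abelian surfaces produce curved Okounkov bodies; the curved phenomena you may be recalling concern other invariants (volume functions, Seshadri loci) rather than the two-dimensional Okounkov body itself. The paper's construction is necessarily on a threefold: one takes $X$ to be a blow-up of $\P^3$ along an elliptic curve lying on a Cutkosky-type quartic surface $S$ with round nef cone, arranges the flag so that the slices of $\Delta(D)$ are Okounkov bodies on $S$, and then chooses $\nu=\ord_Z$ for a reduced irreducible $Z\in|D|$. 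The key structural point (Proposition~\ref{notcontinuous}) is that for such a $\nu$ the sub-bodies $\Delta_t(D)$ are just homothetic rescalings of $\Delta(D)$ centred at $p=\nu_{Y_\bullet}(Z)$, so $\phi_\nu(p)=1$; if $\Delta(D)$ fails to be locally a cone at $p$ --- which the round nef cone of $S$ forces --- then nearby boundary points have $\phi_\nu=0$, and discontinuity follows. Your plan of ``calibrating $\nu$ and the flag so that the computation can be carried out explicitly'' is on the right track, but the dimension must be at least three, and the clean mechanism is this homothety criterion rather than an ad hoc calculation.
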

\noindent
The Theorem will be proven in subsections 4.1 and 4.3. Coupled with the fact that on surfaces Okounkov bodies of divisors are polygones \cite{KLM},
we obtain the  following.

\begin{corollary}
 Let $X$ be a smooth projective surface over the complex numbers, $\ybul$ an admissible flag, $L$ a big Cartier divisor,
 $\nu$ a geometric valuation on $X$. Then the function $\phi_\nu: \Delta_{\ybul}(L)\to \R$  is continuous.
\end{corollary}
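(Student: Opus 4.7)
The plan is to deduce the corollary directly from part (1) of Theorem \ref{thm:cont}, using as the only additional input the fact that on a smooth projective surface the Newton--Okounkov body of a big Cartier divisor with respect to any admissible flag is a polygon. This polyhedrality is precisely the content of the Küronya--Lozovanu--Maclean result \cite{KLM} cited immediately before the corollary.

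More concretely, I would proceed as follows. First, I note that $X$ is projective and that a big Cartier divisor $L$ is in particular $\Q$-effective, so the hypothesis of Theorem \ref{thm:cont}(1) on $D$ is satisfied by taking $D=L$. Second, I take $V_\bullet$ to be the complete graded linear series of $L$, namely $V_m \deq \HH{0}{X}{\OO_X(mL)}$ for all $m \ge 0$, so that $\Delta_{\ybul}(V_\bullet)=\Delta_{\ybul}(L)$. Third, since $\dim X = 2$, the Okounkov body $\Delta_{\ybul}(L) \subset \R^2$ is by \cite{KLM} a (rational) polygon, and in particular a polytope in the sense required by the hypothesis of Theorem \ref{thm:cont}(1). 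Fourth, the geometric valuation $\nu$ on $X$ is given as part of the data. Feeding these choices into Theorem \ref{thm:cont}(1) yields the continuity of $\phi_\nu$ on all of $\Delta_{\ybul}(L)$, which is the claim.

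There is essentially no obstacle: the corollary is a formal consequence of combining Theorem \ref{thm:cont}(1) with the surface polyhedrality theorem of \cite{KLM}. The only point worth flagging is that the statement of Theorem \ref{thm:cont}(1) is phrased for an arbitrary graded linear series $V_\bullet$ associated to $D$, while the corollary is phrased only for a divisor $L$; this is harmless because one may (and I do) simply take $V_\bullet$ to be the complete graded linear series of $L$, whose Okounkov body is exactly $\Delta_{\ybul}(L)$. No further geometric input about surfaces is needed beyond the polyhedrality statement.
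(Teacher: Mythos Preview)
Your proposal is correct and matches the paper's approach exactly: the corollary is obtained by combining Theorem~\ref{thm:cont}(1) with the polyhedrality of Okounkov bodies on surfaces from \cite{KLM}, applied to the complete graded linear series of $L$.
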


The functions $\phi_{\fbul}$ have many interesting formal properties, one of them is an interesting  reduction property.
More precisely,  we observe  that given a judicious choice of a flag, the computation of $\phifbul$ can be reduced to the boundary of the Newton--Okounkov
body.

\begin{theorem}
  Assume that $\vbul$ is a graded linear series associated to the line bundle
   $L$ such that there is an irreducible divisor $Y_1\in|L|$. We take a flag $\ybul$
   whose divisorial part is $Y_1$. Let $\fbul$ be a filtration on $\vbul$
   defined by a geometric valuation $\nu$. Then for $x=(x_1,\dots,x_n)\in\Delta_{\ybul}(\vbul)$
   we have
\[
   \phifbul(x_1,\dots,x_n) \equ (1-x_1)\phifbul\left(0,\frac{x_2}{1-x_1},\dots,\frac{x_n}{1-x_n}\right)
      + x_1\cdot\nu(Y_1).
\]
\end{theorem}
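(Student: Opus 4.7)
The overall strategy is to use multiplication by the section cutting out $Y_1$ to set up a change of variables in the Okounkov body. Since $Y_1\in |L|$ is irreducible, choose a section $s_1\in H^0(X,L)$ with $\mathrm{div}(s_1)=Y_1$. Because the divisorial part of $\ybul$ is $Y_1$, the section $s_1$ has the two ``universal'' valuation vectors
\[
  \nu_{\ybul}(s_1)\equ (1,0,\dots,0)\equ e_1,\qquad \nu(s_1)\equ \nu(Y_1).
\]
Assuming (as is customary in this circle of results) that the graded linear series $\vbul$ is stable under multiplication by $s_1$ and under the corresponding division whenever the latter is regular, every $s\in V_m$ admits a unique factorization $s = s_1^{a_1}\cdot\tilde s$ with $a_1=\ord_{Y_1}(s)$ and $\tilde s\in V_{m-a_1}$ not vanishing identically along $Y_1$. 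By multiplicativity of both valuations this yields
\[
  \nu_{\ybul}(s)\equ a_1 e_1 + \bigl(0,\nu_{\ybul}(\tilde s)\bigr),\qquad \nu(s)\equ a_1\,\nu(Y_1) + \nu(\tilde s).
\]

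With this factorization in hand I would argue both inequalities in the claimed identity by working with the description of $\phifbul$ as (the concave envelope of) $\phifbul(x)=\limsup\tfrac{1}{m}\nu(s_m)$ where $(s_m)$ runs over sequences $s_m\in V_m$ with $\nu_{\ybul}(s_m)/m\to x$. Fix $x=(x_1,\dots,x_n)\in\dybul(\vbul)$ with $x_1\in(0,1)$. Given a sequence $(s_m)$ realizing $\phifbul(x)$, set $a_m\deq \ord_{Y_1}(s_m)$ and $s_m=s_1^{a_m}\tilde s_m$. Then $a_m/m\to x_1$ and
\[
  \frac{\nu_{\ybul}(\tilde s_m)}{m-a_m}\lra \left(0,\frac{x_2}{1-x_1},\dots,\frac{x_n}{1-x_1}\right),
\]
while a direct computation gives
\[
  \frac{\nu(s_m)}{m}\equ \frac{a_m}{m}\,\nu(Y_1)\,+\,\frac{m-a_m}{m}\cdot\frac{\nu(\tilde s_m)}{m-a_m}.
\]
Taking $\limsup$ produces the inequality ``$\leq$''. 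Conversely, given a near-optimal sequence $(\tilde s_k)$ at the slice point $(0,x_2/(1-x_1),\dots,x_n/(1-x_1))$ and putting $m=k+\lceil x_1 m\rceil$ (i.e.\ $a_m\approx x_1 m$), the products $s_1^{a_m}\tilde s_k$ form an admissible sequence approaching $x$, which yields the reverse inequality.

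\paragraph{Main obstacle.}
The delicate part is to match the two limsups precisely, since the ``optimal'' sequence for $\phifbul(x)$ need not have $a_m/m$ converging to $x_1$ along a single subsequence, and one has to approximate general sequences by ones with prescribed first-coordinate behaviour. The correct way around this is to use the filtration-theoretic definition $\phifbul(x) \equ \sup\{t\colon x\in \dybul(\calf_{t\bullet}\vbul)\}$ together with the observation that the sub-series $s_1^{a_\bullet}\calf_\bullet\vbul$ cut out exactly the ``$x_1\ge$ something'' slice, allowing the supremum over $t$ to decouple along the $x_1$-direction. Concavity of $\phifbul$ on the interior (guaranteed by the Boucksom--Chen/Witt-Nystr\"om construction) together with upper semicontinuity then forces equality on the full Okounkov body; the boundary value $x_1=1$ is handled by continuity and gives the single term $\nu(Y_1)$ as expected.
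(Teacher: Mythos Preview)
Your core idea---factoring sections through the defining section $s_1$ of $Y_1$ and exploiting multiplicativity of both $\nu_{\ybul}$ and $\nu$---is exactly the engine of the paper's proof. The difference is in how the limiting process is organised, and this is where the paper's route is substantially cleaner than yours.

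You work with approximating sequences $s_m\in V_m$ with $\nu_{\ybul}(s_m)/m\to x$, which forces you to juggle two inequalities and, as you correctly flag, to match $\limsup$'s along subsequences whose first coordinate only \emph{approximately} equals $x_1$. The paper avoids this obstacle altogether. It first restricts to the dense set of normalized valuation vectors $x$, where by definition there exist sections $s\in V_{mk}$ with $\nu_{\ybul}(s)=mkx$ \emph{exactly} (after choosing $m$ so that $mx\in\Z^n$, justified by the Veronese homogeneity theorem proved immediately beforehand). For such $s$ the factorisation $s=s_1^{mkx_1}\cdot s'$ is a genuine bijection between the sets over which the suprema are taken: sections in $V_{mk}$ with valuation vector $mkx$ correspond one-to-one with sections $s'\in V_{(1-x_1)mk}$ with valuation vector $(1-x_1)mk\cdot x''$, where $x''=(0,\tfrac{x_2}{1-x_1},\dots,\tfrac{x_n}{1-x_1})$. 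Equality of the two sides is then immediate---no separate $\le$ and $\ge$ are needed---and a second application of Veronese homogeneity with scale factor $m(1-x_1)$ converts the sup on the right into $\phifbul(x'')$. The extension from valuation vectors to all of $\dybul(\vbul)$ is by taking closed concave envelopes on both sides.

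What your approach buys is independence from the Veronese homogeneity result (you build the rescaling into the sequence directly); what it costs is the obstacle you identified, whose proposed resolution via the Boucksom--Chen description is plausible but left vague. If you want to push your line through, the cleanest fix is simply to restrict to normalized valuation vectors from the start: there the sequence $(s_m)$ may be chosen with $\nu_{\ybul}(s_m)=m x$ on the nose, your ``obstacle'' evaporates, and you recover essentially the paper's argument.
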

\noindent
We verify this claim in  Theorem~\ref{thm:function from boundary} below.

The Okounkov functions we define are without exception integrable. Boucksom and Chen show  in passing that the integral of Okounkov functions is independent of the
choice of the flag.  In a sequel \cite{KMS} to our current paper we establish that the maximum of an Okounkov function is independent of the chosen
flag as well.

The integrals $I(D;\nu)$ give rise to new  invariants of divisors or graded linear series. For functions associated to test configurations Witt-Nystr\"om
observes in \cite{Nys10} that their normalized integral equals the Futaki invariant $F_0$, nevertheless, the geometric meaning of $I(D;\nu)$ is quite unclear.

Let $\nu$ be a geometric valuation, $D$ a Cartier divisor on $X$. We define
\[
 I(D;\nu) \deq \frac{1}{\vol{X}{D}}\int_{\Delta_{\ybul}(D)} \phi_\nu\
\]
for an arbitrary admissible flag $\ybul$ on $X$. Then one can interpret \cite[Theorem 1.11]{BouChe11} as saying that  $I(D;\nu)$ is the limit of
normalized sums of jumping values of the underlying filtration.

We summarize fundamental properties of $I(D;\nu)$ in the following statement.

\begin{theorem}
With notation as above, the invariant $I_\nu$ has the following properties.
\begin{enumerate}
 \item If $D\equiv D'$, then $I_\nu(D)=I_\nu(D')$.
\item  For a positive integer $a$, one has $I_\nu(aD) = a\cdot I_\nu(D)$.
\item There is a unique extension of $I_\nu$ to a continuous function
\[
I_\nu:  \Bbig(X) \lra \R_{\geq 0}\ .
\]
\end{enumerate}
\end{theorem}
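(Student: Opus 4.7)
The plan is to reduce all three claims to a single volume-theoretic formula for $\int_{\Delta_{\ybul}(D)}\phi_\nu$, from which the properties then follow for essentially formal reasons.

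\textbf{The key formula.}  Starting from the layer-cake identity
\[
  \int_{\Delta_{\ybul}(D)}\phi_\nu \equ \int_0^{\infty}\voll_{\R^n}\bigl(\st{\phi_\nu\ge t}\cap\Delta_{\ybul}(D)\bigr)\,dt,
\]
I would invoke the Boucksom--Chen description of super-level sets: up to a null set, $\st{\phi_\nu\ge t}$ is the Okounkov body of the graded sub-linear-series $\calf_t\vbul$. For a geometric valuation $\nu$, realized after passing to a birational model $\pi\colon X'\to X$ as a positive multiple of $\ord_E$ for a prime divisor $E$, this sub-series is $\bigoplus_m H^0\bigl(X',\pi^*(mD)-\lceil mt\rceil E\bigr)$, whose asymptotic volume is $\voll_{X'}(\pi^*D-tE)$. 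Combining these observations gives the key identity
\[
  n!\int_{\Delta_{\ybul}(D)}\phi_\nu \equ \int_0^{\tau(D;E)}\voll_{X'}(\pi^*D-tE)\,dt,
\]
where $\tau(D;E)$ denotes the pseudo-effective threshold of $E$ along $\pi^*D$. This identity is the technical heart of the argument.

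\textbf{Properties (1) and (2).}  Because the volume function on $X'$ and the pseudo-effective threshold are numerical invariants, so is the right-hand side of the key formula; combined with the numerical invariance of $\voll(D)$, this yields (1). Homogeneity (2) can be derived directly from the scaling relations $\Delta_{\ybul}(aD)=a\Delta_{\ybul}(D)$ and $\phi_\nu^{aD}(ax)=a\phi_\nu^D(x)$, whose combination gives $\int_{\Delta(aD)}\phi_\nu^{aD}=a^{n+1}\int_{\Delta(D)}\phi_\nu^D$; dividing by $\voll(aD)=a^n\voll(D)$ produces the advertised factor of $a$. (Alternatively, substitute $t\mapsto at$ in the key formula.)

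\textbf{Continuous extension (3).}  Using (1) and (2), $I_\nu$ descends first to a well-defined function on the numerical $\Q$-classes inside $\Bbig(X)$. For the extension to arbitrary real big classes I would take the right-hand side of the key formula, divided by $\voll(D)$, as the defining expression and verify that it is continuous on $\Bbig(X)$: indeed $\voll_{X'}$ is continuous on the big cone, and on any compact subset $K\subset\Bbig(X)$ the pseudo-effective threshold is uniformly bounded and the integrand uniformly dominated, so dominated convergence gives continuity of the integral in $D$. Since $\voll(D)$ is continuous and strictly positive on $\Bbig(X)$, the quotient is continuous and agrees with $I_\nu$ at $\Q$-classes; uniqueness of the extension is then forced by density of $\Q$-classes.

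\textbf{Main obstacle.}  The delicate step is the level-set identity $\st{\phi_\nu\ge t}=\Delta_{\ybul}(\calf_t\vbul)$ (up to a null set), and the uniform control of this identification as $t$ varies. The boundary discontinuities of $\phi_\nu$ flagged by Theorem~\ref{thm:cont}(2) must be controlled by working modulo Lebesgue-measure-zero sets; additionally, if $\nu$ does not live already on $X$ but only on a non-trivial birational model $X'$, one must keep careful track of the interplay between the flag $\ybul$ on $X$ and the divisor $E\subset X'$ in order to justify the asymptotic volume computation appearing in the key formula.
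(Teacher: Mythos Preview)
Your argument is correct, and your layer-cake formula is essentially the paper's own Proposition~\ref{prop:integral} (quoted from \cite{BouChe11}); the additional step you take is to rewrite $\voll_{\R^n}\bigl(\Delta(V^{(t)})\bigr)$ as $\tfrac{1}{n!}\voll_{X'}(\pi^*D-tE)$ and then work entirely on the N\'eron--Severi side of $X'$.  From that point on, however, your route diverges from the paper's.  For (1) the paper proves the stronger fact that the Okounkov function $\phi_\nu$ itself is a numerical invariant (Proposition~\ref{prop:num equiv}): it shows directly, via a Castelnuovo--Mumford regularity trick \`a la \cite[Proposition~4.1]{LazMus09}, that $\Delta\bigl(|\bullet D|^{(t)}\bigr)=\Delta\bigl(|\bullet(D+P)|^{(t)}\bigr)$ for every numerically trivial $P$ and every $t$.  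You deduce the weaker invariance of the integral from numerical invariance of $\voll_{X'}$, which is faster but yields less.  For (3) the paper instead builds a global concave function on the global Okounkov body $\Delta_{\ybul}(X)$ (Proposition~\ref{prop: global cont}) and applies dominated convergence to that; your argument stays on the volume side and applies dominated convergence to $t\mapsto\voll_{X'}(\pi^*D-tE)$.  Your approach is more elementary in that it avoids the global Okounkov body, and it makes the link with the pseudo-effective threshold transparent; the paper's approach, on the other hand, gives pointwise control of $\phi_\nu$ rather than just its integral.  The obstacle you flag---matching $V^{(t)}$ with $H^0(X',\pi^*(mD)-\lceil mt\rceil E)$ and handling the resulting $\voll_{\R^n}(\Delta(V^{(t)}))=\tfrac{1}{n!}\voll_{X'}(\pi^*D-tE)$ for $t$ below the threshold---is real but routine once one checks that $V^{(t)}$ satisfies the condition~(C) of \cite[Definition~2.9]{LazMus09}; this is exactly the kind of verification behind Remark~\ref{rem:max jumping number}.
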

\noindent
The claims above will be shown in Proposition~\ref{prop:num equiv}, Remark~\ref{rem:homogenity of I(D nu)}, and Proposition~\ref{prop:integral cont}.

A few words about the organization of this paper: we start, in Section 2, by recalling the definitions of Okounkov bodies, giving some examples of
calculations, and proving some technical lemmas which will be needed in the rest of the paper. Section 3 contains definitions and technical preliminaries on
filtrations of algebras. In Section 4, we then present Witt-Nystr\"om and Boucksom-Chen's definitions of Okounkov functions, deal with the important issue
of continuity, and  calculate several explicit examples of Okounkov functions before turning to the question of invariants of Okounkov functions.
We treat integrals of Okounkov functions in Section 5.
One of essential tools used repeatedly in the present paper is Fekete Lemma \cite{Fek23}.
Section 6 is an appendix by S\'ebastien Boucksom presenting a general Fekete-type lemma originating from \cite{Nys09}, and which can be used to construct the Okounkov function of a filtration.

\paragraph{Acknowledgments.} We first  heard about the possibility of defining interesting functions on Okounkov bodies from Bo Berndtsson at a workshop in Oberwolfach. We are grateful
to S\'ebastien Boucksom, Lawrence Ein, Patrick Graf, Daniel Greb, and Rob Lazarsfeld for helpful discussions.

During this project Alex K\"uronya  was supported in part by  the DFG-Forscher\-grup\-pe 790 ``Classification of Algebraic Surfaces and Compact Complex
Manifolds'', and the OTKA Grants 77476 and  81203 by the Hungarian Academy of Sciences. Tomasz Szemberg's research was partly supported by NCN grant
UMO-2011/01/B/ ST1/04875. Part of this work was done while the second author was visiting the Uniwersytet Pedagogiczny in Cracow. We would like to use this
opportunity to thank the Uniwersytet Pedagogiczny for the excellent working conditions.

\section{Definitions and examples}
\subsection{Okounkov bodies}
   We recall here some basic notions and properties of Okounkov bodies and establish  notation.
   A systematic development of the theory in the geometric setting  has been initiated in \cite{LazMus09} and \cite{KavKh08}, we refer
   to these  articles for motivation and additional details. The phrases 'Okounkov body' and 'Newton--Okounkov body' will
   be used interchangebly throughout the text.

   Let $X$ be an irreducible projective variety of dimension $n$ and
   $$\ybul:\;X=Y_0\supset Y_1\supset\dots\supset Y_{n-1}\supset Y_n=\left\{p\right\}$$
   be a flag of irreducible subvarieties of $X$ such that
   $\codim_X(Y_i)=i$ and $p$ is a smooth point of each $Y_i$.

   Let $D$ be a Cartier divisor on $X$ and let $\vbul$ be a graded linear series
   associated to $D$ (see \cite[Definition 2.4.1]{PAG}).

   The flag $\ybul$ defines a rank-$n$ valuation
   $$\nuy:V_k\setminus\st{0}\to\Z^n $$
   in the following way. Given a section $0\neq s\in V_k\subset H^0(X, kD)$ we set
   $$\nu_1=(\nuy)_1(s):=\ord_{Y_1}(s).$$
   This determines a section
   \begin{equation}\label{eq:s1}
      \wtilde{s}\in H^0(X,kD-\nu_1Y_1),
   \end{equation}
   which does not vanish identically along $Y_1$, and thus restricts
   to a non-zero section
   $$s_1\in H^0(Y_1,(D-\nu_1Y_1)\restr{Y_1}).$$
   We repeat the above construction for $s_1$ and so on. In this way
   we produce a valuation vector
   $$\nuy(s)=\left((\nuy)_1(s),\dots,(\nuy)_n(s)\right)\in\Z^n$$
   and an element
   \begin{equation}\label{eq:graded semigroup}
      (\nuy(s),k)\in\Gamma_{\ybul}(\vbul)\subset\Z^{n+1}
   \end{equation}
   in the \emph{graded semigroup} of the linear series $\vbul$.
   Let $\valybul(\vbul)\subset\R^n$ be the set of all normalized valuation vectors obtained as above i.e.
   $$\valybul(\vbul)=\left\{\frac1k\nuy(s):\;s\in V_k,\,k=1,2,3,\dots\right\} \subseteq \R^n\,.$$
   We write simply $\valybul(D)$ if $\vbul$ is the complete linear series of $D$.
   For a given element $v\in\Val_{Y_\bullet}(V_\bullet)$, we define
   $$
      S_v \deq \st{k\in\N\mid \exists s\in V_k\colon \nu_{Y_\bullet}(s)=kv}\ .
   $$
   Clearly $S_v$ is an additive subsemigroup in $\N$.
\begin{definition}[Okounkov body of a graded linear series]
   The \emph{Okounkov body} $\dybul(\vbul)$ of $\vbul$ is the closed convex hull of the set $\valybul(\vbul)$.
\end{definition}

   \begin{remark}
   Note that we abuse notation slightly, since $\dybul(\vbul)$ is in general a convex compact set only. The above definition of a Newton--Okounkov body
   works fine for any arbitrary graded linear series $\vbul$. In fact, an interesting topic of Okounkov bodies for non-big pseudo-effective
   divisors has been taken on recently by Di Biagio and Pazienza \cite{BiaPaz12}.

    As explained in \cite[Lemma 2.6]{LazMus09}, if $\vbul$ is big, then the corresponding Okounkov body will indeed
   contain an open ball. By a big graded linear series we mean one satisfying Condition (C) of \cite[Definition 2.9]{LazMus09}.
   \end{remark}

  We will see below that in fact taking the closure is enough as the normalized
   valuation vectors are dense in the convex hull.
   Again, if $\vbul$ is the complete linear series associated to a Cartier divisor $D$ on $X$, then we write $\dybul(D)$ for its Okounkov body.

\begin{example}[Okounkov bodies of $\P^2$ and its blow up]\label{ex:ok of p2 and blow up}\rm
   Let $\ell$ be a line in $X_0=\P^2$ and $P_0\in\ell$ a point.
   In  what follows we operate with a fixed flag
   $$\ybul:\; X_0\supset\ell\supset\left\{P_0\right\}.$$\\
   a). Let $D_0=\calo_{\P^2}(2)$. Then $\dybul(D_0)$ is twice the standard simplex in $\R^2$
\unitlength.1mm
\begin{center}
\begin{pgfpicture}{0cm}{0cm}{6cm}{6cm}
   \pgfline{\pgfxy(0.5,1)}{\pgfxy(5.5,1)}
   \pgfline{\pgfxy(1,0.5)}{\pgfxy(1,5.5)}
   \pgfline{\pgfxy(5,1)}{\pgfxy(1,5)}
   \pgfputat{\pgfxy(5,0.9)}{\pgfbox[center,top]{2}}
   \pgfputat{\pgfxy(0.9,0.9)}{\pgfbox[right,top]{0}}
   \pgfputat{\pgfxy(0.9,5)}{\pgfbox[right,center]{2}}
   \pgfputat{\pgfxy(2.9,2)}{\pgfbox[right,center]{$\dybul(D_0)$}}
\end{pgfpicture}
\end{center}
   b). Let $P_1$ be a point in the plane not lying on the line $\ell$
   and let $f_1:X_1=\Bl_{P_1}X_0\to X_0$ be the blow up of $P_1$
   with exceptional divisor $E_1$. For $D_1=f_1^*\calo_{\P^2}(2)-E_1$, we have
\begin{center}
\begin{pgfpicture}{0cm}{0cm}{6cm}{6cm}
   \pgfline{\pgfxy(0.5,1)}{\pgfxy(5.5,1)}
   \pgfline{\pgfxy(1,0.5)}{\pgfxy(1,5.5)}
   \pgfline{\pgfxy(1,5)}{\pgfxy(3,3)}
   \pgfline{\pgfxy(3,1)}{\pgfxy(3,3)}
   \pgfputat{\pgfxy(3,0.9)}{\pgfbox[center,top]{1}}
   \pgfputat{\pgfxy(0.9,0.9)}{\pgfbox[right,top]{0}}
   \pgfputat{\pgfxy(0.9,5)}{\pgfbox[right,center]{2}}
   \pgfputat{\pgfxy(2.7,2)}{\pgfbox[right,center]{$\dybul(D_1)$}}
   \pgfputat{\pgfxy(4,3)}{\pgfbox[right,center]{$(1,1)$}}
\end{pgfpicture}
\end{center}
   c). Let $P_1, P_2$ be points in the plane not lying on the line $\ell$
   and such that $P_0,P_1,P_2$ are not collinear. Let $f_2:X_2=\Bl_{P_1,P_2}X_0\to X_0$
   be the blow up of $P_1, P_2$ with exceptional divisors $E_1, E_2$.
   For a big and nef line bundle $D_2=f_2^*\calo_{\P^2}(2)-E_1-E_2$, we have
   then the Okounkov body as in the picture c1) below. The picture c2)
   shows the Okounkov body of the same line bundle under assumption
   that $P_0,P_1,P_2$ are collinear.
\begin{center}
\begin{pgfpicture}{0cm}{0cm}{12cm}{6cm}
   \pgfline{\pgfxy(0.5,1)}{\pgfxy(3.5,1)}
   \pgfline{\pgfxy(1,0.5)}{\pgfxy(1,5.5)}
   \pgfline{\pgfxy(3,1)}{\pgfxy(1,5)}
   \pgfputat{\pgfxy(3,0.9)}{\pgfbox[center,top]{1}}
   \pgfputat{\pgfxy(0.9,0.9)}{\pgfbox[right,top]{0}}
   \pgfputat{\pgfxy(0.9,5)}{\pgfbox[right,center]{2}}
   \pgfputat{\pgfxy(2.5,1.8)}{\pgfbox[right,center]{$\dybul(D_2)$}}
   \pgfputat{\pgfxy(2.5,0.5)}{\pgfbox[center,top]{c1)}}
   \pgfline{\pgfxy(6.5,1)}{\pgfxy(9.5,1)}
   \pgfline{\pgfxy(7,0.5)}{\pgfxy(7,5.5)}
   \pgfline{\pgfxy(7,5)}{\pgfxy(9,3)}
   \pgfline{\pgfxy(7,1)}{\pgfxy(9,3)}
   \pgfputat{\pgfxy(9,0.9)}{\pgfbox[center,top]{1}}
   \pgfputat{\pgfxy(6.9,0.9)}{\pgfbox[right,top]{0}}
   \pgfputat{\pgfxy(6.9,5)}{\pgfbox[right,center]{2}}
   \pgfputat{\pgfxy(8.7,3)}{\pgfbox[right,center]{$\dybul(D_2)$}}
   \pgfputat{\pgfxy(10,3)}{\pgfbox[right,center]{$(1,1)$}}
   \pgfputat{\pgfxy(8.5,0.5)}{\pgfbox[center,top]{c2)}}
\end{pgfpicture}
\end{center}
\end{example}

\subsection{Density of valuation vectors}
   Here we verify that the points
   in $\valybul(\vbul)$ are dense in the convex hull of $\valybul(\vbul)$, hence also in $\dybul(\vbul)$. This means in particular that the
   closure of $\valybul(D)$ is convex.

   We first treat the case of a complete linear series $|L|$
   on a curve $C$, because it is particularly transparent and
   constructive.

   Fix a flag $\ybul:\, C=Y_0\supset Y_1=\left\{p\right\}$, and recall that $\dybul(L)=[0,\deg L]$, see \cite[Example 1.3]{LazMus09}.
   For a given point $q\in C$ (which might or might not be equal to $p$), we write
   \[
     \shs_{v,k}(q) \deq \st{t\in\R\mid \exists s\in V_k\colon \ord_q(s)\geq t\, ,\, \nu_{Y_\bullet}(s)=kv}\ .
   \]
   By definition $\shs_{v,k}(q)\neq \emptyset$ if and only if $k\in S_v$.

\begin{lemma}[Complete linear series on a curve]\label{lemma:val vector ample}
 With notation as above we have the following claims.
\begin{enumerate}
    \item[(1.)] $\Val_{Y_\bullet}(L)\setminus\deg L \equ  [0,\deg L)\cap \Q$, in particular, the set of normalized vanishing vectors is dense in
       $\Delta_{Y_\bullet}(L)$.
    \item[(2.)] For $v\in\valybul(\vbul)$ the set $S_v\subseteq \N$ is an additive subsemigroup with the exponent $e(S_v)=d$, where $d$ equals the denominator
    of the rational number $v$ in its reduced form if $v<\deg L$, and $d$ is the order of $L-(\deg L)p$ in $\Pic^0$ otherwise.
    \item[(3.)] For given $v\in\Val_{Y_\bullet}(L)$ and $q\in C$, the  sequence
    \[
       a_k \deq  \frac{1}{dk} \sup\shs_{v,dk}(q)
    \]
is convergent.
\end{enumerate}
\end{lemma}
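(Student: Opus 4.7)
The key simplification in the curve case is that the flag reduces to the single point $\{p\}$, so $\nuy(s) = \ord_p(s) \in \Z_{\ge 0}$, and for $s \in V_k = H^0(C, kL)$ we have the a priori bound $\ord_p(s) \leq k\deg L$. Dividing by $k$ gives $\valybul(L) \subseteq [0,\deg L]\cap\Q$ immediately.

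To prove (1), fix $v = a/b \in [0,\deg L)\cap\Q$ in lowest terms. For any integer $m \geq 1$, the divisor $mbL - ma \cdot p$ has degree $m(b\deg L - a) > 0$, so choosing $m$ large enough that this degree exceeds $2g-2$, Riemann--Roch gives
\[
   h^0(C, mbL - ma \cdot p) \;=\; h^0(C, mbL - (ma+1)\cdot p) + 1,
\]
producing a section $s \in H^0(C, mbL)$ with $\ord_p(s) = ma$, so that $\frac{1}{mb}\nuy(s) = v$. For the case $v = \deg L$: such a value is realised by an $s \in V_k$ if and only if $\divv(s) = k\deg(L)\cdot p$, equivalently $kL \sim k\deg(L)\cdot p$, equivalently $d \mid k$, where $d$ is the order of $L - \deg(L) p$ in $\Pic^0(C)$; note that this order is finite precisely when $\deg L$ appears in $\valybul(L)$.

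Part (2) then summarises what (1) has already produced. Multiplicativity $\ord_p(s s') = \ord_p(s) + \ord_p(s')$ makes $S_v$ a subsemigroup of $\N$. For $v = a/b < \deg L$ with $\gcd(a,b)=1$, the requirement $kv \in \Z$ forces $S_v \subseteq b\N$, while the Riemann--Roch step above places $mb$ in $S_v$ for all $m \gg 0$, so the exponent of $S_v$ is exactly $b = d$. For $v = \deg L$, the Picard-group computation gives $S_v = d\N$ on the nose.

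For (3), I would invoke Fekete's lemma in its superadditive form. Given sections $s \in V_{dk}$ and $s' \in V_{dk'}$ witnessing values $t$ and $t'$ (meaning $\nuy(s) = dkv$, $\ord_q(s) \geq t$, and similarly for $s'$), the product $ss' \in V_{d(k+k')}$ satisfies $\nuy(ss') = d(k+k')v$ and $\ord_q(ss') \geq t+t'$, proving that $k \mapsto \sup\shs_{v,dk}(q)$ is superadditive in $k$. Combined with the upper bound $\sup\shs_{v,dk}(q) \leq dk\deg L$, Fekete's lemma then yields convergence of $a_k$ to $\sup_k a_k$. The only delicate point is handling the boundary value $v = \deg L$ in (1)--(2): one must identify ``$s$ vanishes to full order at $p$'' with the Picard-group divisibility condition, and notice that when $L - \deg(L) p$ is non-torsion the value $\deg L$ is simply absent from $\valybul(L)$. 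Otherwise all three parts reduce cleanly to Riemann--Roch plus the multiplicative Fekete trick.
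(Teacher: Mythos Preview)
Your proposal is correct and follows essentially the same route as the paper: Riemann--Roch with $h^1$-vanishing for large multiples to produce sections of exact order $mv$ at $p$ in (1) and (2), the torsion condition on $L-(\deg L)p$ in $\Pic^0(C)$ for the boundary value, and superadditivity via multiplication of sections together with Fekete's lemma for (3). The only cosmetic difference is that you make the upper bound $\sup\shs_{v,dk}(q)\le dk\deg L$ explicit to ensure finiteness of the limit, whereas the paper leaves this implicit.
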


\begin{remark}\label{rem:deg L}
Let us discuss the possibility of $\deg L\in\Val_{\ybul}(L)$. By definition, this happens precisely if $\HH{0}{C}{\OO_C(mL-m(\deg L)p)}\neq 0$ for some $m\geq
0$. This is equivalent to asking that
\[
 L -(\deg L)p
\]
   is a torsion point in $\Jac(C)$. This is certainly not the case for most
   line bundles $L$ on a non-rational curve $C$.
\end{remark}

\proofof{Lemma \ref{lemma:val vector ample}}
(1) By construction all elements $v$ in $\Val_{\ybul}(L)$ are rational numbers,
   and they sit inside $\Delta_{\ybul}(L)=[0,\deg L]$, in particular, $v\leq \deg L$.
   \\
   In the other
direction, let $v\in \Q\cap [0,\deg L)$.
   Let $m=kd$ be so large that
   \begin{equation}\label{eq:h1vanish}
      h^1(C,\calo_C(mL-mv\cdot p))=0\; \mbox{ and }\; h^1(C,\calo_C(mL-(mv+1)\cdot p))=0.
   \end{equation}
   We want to show that $m\in S_v$, i.e. that there exists
   a section of $\calo_C(mL)$ vanishing at $p$ to order exactly $mv$.
   The vanishing in \eqnref{eq:h1vanish} implies
   \begin{equation}\label{eq:h0differ}
      h^0(C,\calo_C(mL-mv\cdot p)) > h^0(C,\calo_C(mL-(mv+1)\cdot p))
   \end{equation}
   via Riemann--Roch applied on $C$ to both systems. They are non-empty
   by the same token. It follows that there is a section in $mL$
   whose vanishing order at $p$ is exactly $mv$. Hence $m\in S_v$.

(2) The claim that $S_v$ is an additive  subsemigroup of $\N$ is a consequence of the fact that $\nu_{\ybul}$ behaves logarithm--like on global sections.
   It must be $d|e(S_v)$, since
   $mv$ is an integer for every $m\in S_v$.
   In order to show the equality, we need to check that
   $S_v$ contains all natural numbers $kd$ for  $k\gg 0$.

   This follows again from a Riemann--Roch computation.
   Let $v\in \valybul(L)$ be fixed with $v<\deg(L)$. Since $L-vp$ is an ample $\Q$-divisor,
   there exists then $m_0$ such that for all $m\geq m_0$
   one has the vanishing \eqnref{eq:h1vanish} whenever $mv$ is an integer.

   Let $k$ be so that $kd$ is an integer satisfying $kd>m_0$. Then Riemann-Roch
   together with the vanishing implies as above
   \begin{equation}\label{eq:h0differ1}
      h^0(C,\calo_C(kdL-kdv\cdot p)) > h^0(C,\calo_C(kdL-(kdv+1)\cdot p)),
   \end{equation}
   which in turn means that $kd\in S_v$.

   The case of $v=\deg L$ is immediate from Remark~\ref{rem:deg L}.

(3) Part (2) implies that $\shs_{v,dk}(q)\neq \emptyset $ for $k\gg 0$, hence $b_{dk}:=\sup\shs_{v,dk}(q)$ forms a super-additive sequence of rational numbers (that is,
different from $-\infty$) in  $k$. Consequently, the limit of the sequence $a_k:=\frac{1}{dk}b_{dk}$ exists by \cite{Fek23}. 
\endproofof

We now move on to the general case when the underlying variety $X$ is allowed to have arbitrary dimension, and $V_\bullet$ is a graded linear series.

\begin{lemma}\label{lem:density of valuation}
   Let $X$ be a projective variety, $V_\bullet$ a graded linear series
   (not necessarily big) associated to a $\Q$-effective Cartier divisor $D$. Then
\begin{enumerate}
 \item[(1.)] The set $\Val_{Y_\bullet}(V_\bullet)$ is dense in
$\Delta_{Y_\bullet}(V_\bullet)$.
\item[(2.)] For $v\in\valybul(\vbul)$ the set $S_v\subseteq \N$ is an additive subsemigroup.
\item[(3.)] For given $v\in\Val_{Y_\bullet}(V_\bullet)$ and $q\in C$, the  sequence
\[
a_k \deq  \frac{1}{k} \sup\shs_{v,k}(q)
\]
with $k$ running through the elements of $S_v$ is convergent.
\end{enumerate}
\end{lemma}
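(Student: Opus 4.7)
The plan is to handle (2) first since it underlies the other two parts, then use it for (1) via a rational-convex-combination argument, and finally for (3) via a Fekete-style limit. For (2), if $k_1,k_2\in S_v$ are witnessed by sections $s_i\in V_{k_i}$ satisfying $\nuy(s_i)=k_iv$, then $s_1s_2$ lies in $V_{k_1+k_2}$ because $\vbul$ is a graded linear series (closed under multiplication), and $\nuy(s_1s_2)=\nuy(s_1)+\nuy(s_2)=(k_1+k_2)v$ since $\nuy$ is valuation-like (logarithmic) on nonzero global sections. Hence $k_1+k_2\in S_v$.

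For (1), since $\dybul(\vbul)$ is by definition the closed convex hull of $\valybul(\vbul)$, it is enough to show that every rational convex combination of elements of $\valybul(\vbul)$ already lies in $\valybul(\vbul)$: rational convex combinations are dense in all convex combinations, so this will give $\valybul(\vbul)$ dense in its convex hull, and hence in $\dybul(\vbul)$. Given $v_1,\dots,v_r\in\valybul(\vbul)$ witnessed by sections $s_i\in V_{k_i}$ with $\nuy(s_i)=k_iv_i$, and rational weights $\lambda_i=p_i/q$ with $\sum p_i=q$, I would pick an integer $K$ divisible by every $k_i$ and set $N_i:=p_iK/k_i\in\N$. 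Then $s:=\prod s_i^{N_i}\in V_{qK}$ has $\nuy(s)=\sum N_ik_iv_i=K\sum p_iv_i$, so $\tfrac{1}{qK}\nuy(s)=\sum\lambda_iv_i$ belongs to $\valybul(\vbul)$, as required.

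For (3), I would set $b_k:=\sup\shs_{v,k}(q)$ for $k\in S_v$ and reduce to Fekete's lemma. Super-additivity $b_{k_1+k_2}\geq b_{k_1}+b_{k_2}$ on $S_v$ is obtained exactly as in (2): if $s_i\in V_{k_i}$ satisfies $\nuy(s_i)=k_iv$ and $\ord_q(s_i)\geq t_i$, then $s_1s_2\in V_{k_1+k_2}$ has $\nuy=(k_1+k_2)v$ and $\ord_q(s_1s_2)=\ord_q(s_1)+\ord_q(s_2)\geq t_1+t_2$. The main obstacle will be establishing the linear upper bound $b_k\leq Ck$ needed to make Fekete applicable, since nothing in the setup directly ties $\ord_q$ to the flag $\ybul$. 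I would obtain it by choosing an auxiliary admissible flag $Y'_\bullet$ centered at $q$ (after passing to a smooth model if $q$ is singular), observing that $\ord_q(s)\leq|\nu_{Y'_\bullet}(s)|_1$, and invoking boundedness of the Okounkov body with respect to $Y'_\bullet$, which gives $|\nu_{Y'_\bullet}(s)|_1\leq Ck$ uniformly on $V_k$. Once super-additivity and the linear bound are in place, Fekete's lemma \cite{Fek23} applied to $(b_k)_{k\in S_v}$ on the subsemigroup $S_v\subseteq\N$ delivers convergence of $a_k=b_k/k$ to $\sup_{k\in S_v} b_k/k$, completing the proof.
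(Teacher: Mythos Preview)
Your proof is correct and follows essentially the same strategy as the paper: multiplicativity of sections for (2), closure of $\valybul(\vbul)$ under convex combination for (1), and super-additivity plus Fekete's lemma for (3). The differences are minor. In (1) the paper contents itself with the midpoint case $\tfrac12(v_1+v_2)$, which already suffices for density by iteration; your general rational-combination argument is the same idea carried out in one step. In (3) the paper's argument is terser and does not explicitly secure the linear upper bound $b_k\le Ck$ here, simply citing Fekete; the bound for $\ord_q$ is established separately (Proposition~\ref{prop:filt on a graded lin series}) via blowing up the center of the valuation and intersecting with an ample class, rather than via your auxiliary-flag trick and boundedness of the Okounkov body. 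Both routes work; yours stays closer to the Okounkov-body formalism, while the paper's blow-up argument applies uniformly to geometric valuations with higher-dimensional center.
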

\begin{proof}
   (1.) The argument now is less constructive than in the case of curves, on the other hand it explains
   why the closure of the set of normalized valuation vectors is a convex set.
   Let
   $v_1,v_2\in\Val_{Y_\bullet}(V_\bullet)$, and let $m_i\in\N$, $s_i\in V_{m_i}\subseteq \HH{0}{X}{\OO_X(m_iD)}$ for $i=1,2$ be such that
\[
 \nu_{\ybul}(s_i) \equ m_iv_i\ \ \ \text{for $i=1,2$.}
\]
Then $s_1^{m_2}s_2^{m_1}\in V_{2m_1m_2}$, and
\[
 \nu_{\ybul}(s_1^{m_2}s_2^{m_1}) \equ m_2\cdot\nu_{\ybul}(s_1) + m_1\cdot\nu_{\ybul}(s_2) \equ m_2m_1v_1+m_1m_2v_2 \equ m_1m_2(v_1+v_2)\ ,
\]
hence
\[
\frac{1}{2m_1m_2} \Gamma_{\ybul}(V_{2m_1m_2})  \ni \frac{1}{2m_1m_2}\cdot \nu_{\ybul}(s_1^{m_2}s_2^{m_1}) \equ \frac{1}{2}(v_1+v_2)\ .
\]
   This shows that the midpoint between two normalized valuation vectors is again a normalized valuation vector, hence density follows.

The above  argument shows also that for $v_1,v_2\in\valybul(\vbul)$ the segment $\overline{v_1v_2}$
   is contained in the closure $\Delta_{\ybul}(\vbul)$, therefore the  closure is a convex set.

   (2.) The fact that $S_v$ is an additive subsemigroup follows from the valuation-like behavior of $\nu_{\ybul}$
   and the property that $V_k\cdot V_m\subseteq V_{k+m}$.

   (3.) The proof is the same as in the case of curves.
\end{proof}

\begin{remark}\rm
   Note that the property (1.) in Lemma \ref{lem:density of valuation}
   has been silently used in the proof of \cite[Proposition 2.1]{LazMus09}.
   We include a proof here for the lack of a direct reference.
\end{remark}

\section{Filtrations}

Filtrations of vector spaces and graded algebras are used by Boucksom and Chen \cite{BouChe11} to define functions on Okounkov bodies.
Here we recall the notions we will need, and look at situations that are interesting from the geometric point of view.
The formal considerations come from \cite{BouChe11} for the most part.

\subsection{Filtrations on vector spaces}
   We begin by making it precise what we mean by a filtration in this article.
\begin{definition}[Filtration]
   Let $E$ be a finite dimensional complex vector space. We call a  family
   $\fbul E$ of linear subspaces  of $E$ indexed by real numbers $t\in\R$ a \emph{filtration} on $E$ if
   \begin{enumerate}
   \item for all real numbers $t\in\R$, $\calf_tE\subset E$ is a vector subspace;
   \item $\fbul$ is non-increasing i.e.
   $$\mbox{from } t_1\leq t_2\;\mbox{ follows }\; \calf_{t_1}E\supset\calf_{t_2}E;$$
   \item $\fbul$ is left continuous i.e.
   $$\lim\limits_{t\rightarrow t_0^-}\calf_tE=\calf_{t_0}E;$$
   \item $\fbul$ is left and right bounded i.e. there exist real numbers $t_l$ and $t_r$ such that
   $$\calf_{t_l}E=E\;\mbox{ and }\; \calf_{t_r}E=0.$$
   \end{enumerate}
\end{definition}
   A standard situation for this article is the following.
\begin{example}[Filtration defined by a valuation]\label{ex:filt defined by val}
   Let $X$ be an irreducible projective variety and let $E\subset\C(X)$ be
   a finite dimensional complex vector subspace of the function field of $X$.
   Let $\nu:\C(X)\to\Z$ be a rank $1$ valuation. Then
   $$\calf_tE:=\left\{f\in E:\; \nu(f)\geq t\right\}$$
   is a filtration on $E$.

   The sort of valuation we are mostly interested are geometric valuations, that is, orders of vanish along a subvariety.
\end{example}
   Given a filtration we define jumping numbers.
\begin{definition}[Jumping numbers]
   Let $\fbul$ be a filtration on a finite dimensional vector space $E$.
   The numbers
   $$e_j(E,\fbul):=\sup\left\{t\in\R:\;\dim\calf_tE\geq j\right\}$$
   for $j=1,\dots,\dim E$ are the \emph{jumping numbers} of the filtration $\fbul$.
   We suppress $E$ and $\fbul$ if the vector space and the filtration are clear from the context
   and write simply $e_j$ in such a case.
\end{definition}
   Note that we have the following monotonicity
   $$e_{\min}(E,\fbul):=e_{\dim E}(E,\fbul)\leq\cdots\leq e_1(E,\fbul)=:e_{\max}(E,\fbul).$$
   In particular,
   $$e_{\min}(E,\fbul)=\inf\left\{t\in\R:\;\calf_tE\neq E\right\}\;\mbox{ and }\;
     e_{\max}(E,\fbul)=\sup\left\{t\in\R:\;\calf_tE\neq 0\right\}.$$
   Following Boucksom and Chen, we define the \emph{mass} of $(E,\fbul)$ as
   $$\mass(E,\fbul):=\sum\limits_{j=1}^{\dim E} e_j(E,\fbul).$$

\begin{remark}
 Once the functions associated to filtrations will have been defined, the mass of a filtration will be related to the integral of the corresponding function
over Newton--Okounkov bodies.
\end{remark}

\begin{example}[Jumping numbers on homogeneous polynomials]
   Let $X=\P^2$ and $E=H^0(\calo_{\P^2}(1))$. We consider the filtration $\fbul$
   on $E$ introduced by a geometric valuation $\nu$ given
   by the order of vanishing $\ord_p$ at a fixed point $p\in\P^2$ as in Example \ref{ex:filt defined by val}.
   Then
   $$e_{\min}=e_3=0,\; e_2=e_1=e_{\max}=1\;\mbox{ and }\; \mass=2.$$
\end{example}


\subsection{Filtrations on graded algebras}
   The constructions from the previous part extend to the setting of graded $\C$-algebras.

\begin{definition}[A filtration on a graded object]\label{def:graded filtration}
   Let
   $$\ebul=\bigoplus_{k\geq 0}E_k$$
   be a graded $\C$-algebra with $E_0=\C$ and $\dim E_k$ finite for all $k$.
   A family $\fbul\ebul$ of subspaces of $\ebul$ is a \emph{filtration of the graded algebra} $\ebul$
   if $\fbul E_k$ is a filtration on the vector space $E_k$ for all $k$.\\
   We say that $\fbul$ is \emph{multiplicative} if for all $s,t\in\R$
   and all $m,n$ we have
   $$(\calf_t E_m)\cdot (\calf_s E_n)\subset \calf_{t+s} E_{m+n}.$$
\end{definition}
\begin{example}[A filtration given by a valuation]\label{ex:graded filt by valuation}
   Let $X$ be an irreducible projective variety.
   Let $\ebul=\oplus_{k\geq 0}E_kT^k\subset\C(X)[T]$ be a graded subalgebra which is connected
   (i.e. $E_0=\C$) and locally finite (that is,  $\dim E_k<\infty$ for all $k$).

   Let $\nu$ be a geometric valuation on $\C(X)$ i.e. a valuation defined
   by the order of vanishing along a  subscheme $Z$ in $X$. Since
   $$ \nu(f_1\cdot f_2) \equ \nu(f_1)+\nu(f_2) \ ,$$
   the expression
   $$\calf_tE_k=\left\{f\in E_k:\; \nu(f)\geq t\right\}$$
   defines a multiplicative filtration.
\end{example}
\begin{definition}[Linearly bounded filtrations]
   In the setup of Definition \ref{def:graded filtration}, we say that
   the filtration $\fbul\ebul$ is \emph{linearly left bounded},
   if there exists a constant $C>0$ such that for all $k$ we have
   $$e_{\min}(E_k,\fbul)\geq -C\cdot k.$$
   Similarly, $\fbul\ebul$ is \emph{linearly right bounded},
   if
   $$e_{\max}(E_k,\fbul)\leq C\cdot k$$
   for a fixed constant $C>0$ and all $k$.
\end{definition}
   We can generalize jumping numbers to the graded setting.
\begin{definition}[Asymptotic jumping numbers]
   With notation as in Definition \ref{def:graded filtration} we set
   $$e_{\min}(\ebul,\fbul):=\liminf \frac1k e_{\min}(E_k,\fbul)\;\mbox{ and }\;
     e_{\max}(\ebul,\fbul):=\limsup \frac1k e_{\max}(E_k,\fbul).$$
\end{definition}

   Note that a  filtration $\fbul\ebul$  of the graded $\C$-algebra $\ebul$ is linearly left bounded
   if and only if $e_{\min}(\ebul,\fbul)>-\infty$ and similarly, it is linearly
   right bounded if and only if $e_{\max}(\ebul,\fbul)<\infty$

\begin{proposition}[Filtration on a graded linear series]\label{prop:filt on a graded lin series}
   Let $X$ be an irreducible normal projective variety of dimension $n$, $D$ a Cartier
   divisor on $X$ and $\vbul$ a graded linear series
   defined by $D$.
   Furthermore let $Z$ be a subvariety in $X$, $\nu=\ord_Z$ be the geometric
   valuation defined by $Z$, and let $\fbul\vbul$ be the filtration given by $\nu$
   as in Example~\ref{ex:graded filt by valuation}. Then $\fbul$ is linearly
   left and right bounded.
\end{proposition}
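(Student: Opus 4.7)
The plan is to realize the geometric valuation $\nu$ as an order of vanishing along a prime divisor on a projective birational model of $X$, and then read off both bounds from elementary intersection theory on that model. Since rescaling $\nu$ by a positive constant rescales all jumping numbers by the same factor, I may assume $\nu=\ord_E$ for some prime divisor $E$ on a projective birational model $\pi\colon Y\to X$. The embedding $\vbul\subseteq\C(X)[T]$ identifies sections of $\calo_X(kD)$ with rational functions via division by the $k$-th power of a fixed rational section of $D$; this alters the valuation of $f\in V_k$ by a term linear in $k$ (namely $-k$ times the coefficient of $E$ in $\pi^*D$), and hence does not affect linear boundedness. Accordingly, it suffices to prove linear left and right boundedness for the \emph{section filtration}
\[
\tilde\calf_tV_k \deq \st{f\in V_k\colon\ord_E(f)\geq t},
\]
where $\ord_E(f)$ is the order of vanishing of $f$ regarded as a global section of $\calo_Y(k\pi^*D)$.

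Linear left boundedness for $\tilde\calf$ is immediate: $\ord_E(f)\geq 0$ for every global section $f$, so $\emin(V_k,\tilde\calf)\geq 0$ for every $k$.

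For linear right boundedness, I would fix an ample divisor $H$ on $Y$. The essential positivity input is $E\cdot H^{n-1}>0$, which holds because $E$ is a positive-dimensional effective prime divisor on $Y$ and $H|_E$ is ample. If $\tilde\calf_tV_k\neq 0$, then some non-zero $f\in V_k$ satisfies $\ord_E(f)\geq t$, so $f$ is a section of $\calo_Y(k\pi^*D-\lceil t\rceil E)$; in particular $k\pi^*D-tE$ is $\Q$-linearly equivalent to an effective divisor on $Y$. Intersecting with $H^{n-1}$ and using that effective classes pair non-negatively with powers of an ample class,
\[
0\;\leq\;(k\pi^*D-tE)\cdot H^{n-1}\;=\;k(\pi^*D\cdot H^{n-1})-t(E\cdot H^{n-1}),
\]
so $t\leq Ck$ with $C\deq(\pi^*D\cdot H^{n-1})/(E\cdot H^{n-1})$, yielding $\emax(V_k,\tilde\calf)\leq Ck$ for every $k$.

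The main obstacle is the organisational bookkeeping: setting up the birational model so that $\nu=\ord_E$ with $E$ prime and $Y$ projective (which is always possible for a geometric valuation, if necessary after further blow-ups), and accounting for the linear-in-$k$ shift between the rational-function filtration $\fbul$ and the section filtration $\tilde\calf$. Once these standard reductions are in place, both bounds collapse to the single positivity statement $E\cdot H^{n-1}>0$ coming from ampleness of $H$.
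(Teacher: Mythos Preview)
Your proposal is correct and follows essentially the same route as the paper: both arguments pass to a projective birational model $\pi\colon Y\to X$ on which the valuation is $\ord_E$ for a prime divisor $E$, observe non-negativity of the order of vanishing for left boundedness, and obtain right boundedness by intersecting $k\pi^*D - tE$ with $H^{n-1}$ for an ample $H$ on $Y$. Your extra bookkeeping about the linear-in-$k$ shift between the rational-function filtration and the section filtration is a legitimate point of care that the paper glosses over (it works directly with $\ord_Z$ on sections), but the mathematical content of the two proofs is the same.
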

\proof
   The valuation $\ord_Z$ is left bounded as $\ord_Z(s)\geq 0$ for all $s\neq 0$, hence also
   $$e_{\min}(V_k,\fbul)\geq 0 \;\mbox{ for all }\; k.$$
   For the right boundedness we claim that there exists a positive constant $C$
   such that
   $$\max\left\{\ord_Z(s):\; s\in V_k\right\}\leq C\cdot k$$
   for all $k$. It is enough to prove this claim for the complete linear series
   $V_k=H^0(X,kD)$. To this end let $\pi:Y\to X$ be the blowing up along $Z$.
   There exists a unique irreducible component $E$ of the exceptional locus
   of $\pi$ mapping surjectively onto $Z$. For this component we have
   $$\ord_Z(s)=\ord_E(\pi^*s)\;\mbox{ for all }\; s\in H^0(X,kD).$$
   Let $H$ be an ample line bundle on $Y$. There exists $C>0$ such that
   $$(\pi^*D-C\,E)\cdot H^{n-1}<0.$$
   This implies that $\ord_Z(s)=\ord_E(\pi^*s)\leq C\cdot k$ for all $s\in H^0(X,kD)$.
   Thus we have
   $$e_{\max}(H^0(X,kD),\fbul)=\max\left\{\ord_Zs:\; s\in H^0(X,kD)\right\}\leq C\cdot k.$$
\endproof

\begin{example}[Asymptotic order of vanishing]\label{ex:asympt order of vanish}
   Let $X$ be a normal projective variety and $\vbul$ a graded linear series on $X$.
   For a geometric valuation $\nu$ we define a filtration $\fbul$ on $\vbul$
   as in Proposition \ref{prop:filt on a graded lin series} and we set
   $$\nu(V_k):=\min\left\{\nu(s):\; s\in V_k\setminus\left\{0\right\}\right\}.$$
   Then
   $$\emin(V_k,\fbul)=\nu(V_k)$$
   and
   $$\emin(\vbul,\fbul)=\lim\frac1k \nu(V_k)=\inf\frac1k \nu(V_k)$$
   recovers  the asymptotic order of vanishing along the center of $\nu$ as defined in \cite[Definition 2.2]{AIBL}.
   The fact that we can write $\inf$ and $\lim$ instead of $\limsup$
   is accounted for by the subadditivity of the sequence $\nu(V_k)$:
   $$\nu(V_{k+m})\leq\nu(V_k)+\nu(V_m)$$
   as explained in \cite[Lemma 2.1]{AIBL} and  Fekete's Lemma \cite{Fek23}.

\end{example}
   The number $\emax$ behaves similarly under mild additional assumption.
\begin{lemma}[$\emax$ for graded linear series]\label{lem:emax for graded lin ser}
   Let $\vbul$ be a graded linear series such that $V_k\neq 0$ for all $k$.
   Then
   $$\emax(\vbul,\fbul)=\lim\limits_{k}\frac1k\emax(V_k,\fbul)=\sup_k\frac1k\emax(V_k,\fbul)$$
   for an arbitrary filtration on $\vbul$.
\end{lemma}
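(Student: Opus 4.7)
The strategy is to show that the sequence $a_k := \emax(V_k,\fbul)$ is \emph{super}additive (rather than subadditive, as in Example~\ref{ex:asympt order of vanish}), and then invoke Fekete's Lemma~\cite{Fek23}, whose superadditive incarnation yields $\lim_k \tfrac{1}{k}a_k = \sup_k \tfrac{1}{k}a_k$. Since the limit then exists, it must coincide with the $\limsup$ appearing in the definition of $\emax(\vbul,\fbul)$, and all three quantities in the statement agree. (As in Example~\ref{ex:graded filt by valuation}, I tacitly take $\fbul$ to be multiplicative, which is the setting where the statement has content.)

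First I would note that $a_k \in \R$ for every $k$: the hypothesis $V_k \neq 0$ combined with left-boundedness gives $a_k > -\infty$, and right-boundedness gives $a_k < \infty$. The heart of the argument is the superadditivity inequality $a_{k+m} \geq a_k + a_m$. By left-continuity of $\fbul$ together with the finite-dimensionality of $V_k$, the decreasing chain $\calf_t V_k$ stabilizes as $t \nearrow a_k$, so that $\calf_{a_k} V_k$ is nonzero; pick a nonzero $s_k \in \calf_{a_k} V_k$ and similarly a nonzero $s_m \in \calf_{a_m} V_m$. Since $X$ is irreducible, the section ring $R(X,D)$ is an integral domain, so the product $s_k \cdot s_m \in V_{k+m}$ is again nonzero. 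Multiplicativity of $\fbul$ then places this product in $\calf_{a_k + a_m} V_{k+m}$, from which $a_{k+m} \ge a_k + a_m$.

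The main subtlety is precisely this last step: ensuring that $\calf_{a_k} V_k$ actually contains a nonzero vector (so that the supremum defining $\emax(V_k,\fbul)$ is attained), and that the product $s_k s_m$ does not vanish. The first point is secured by the combination of the left-continuity axiom and finite-dimensionality of $V_k$; the second by the integrality of $R(X,D)$ on the irreducible variety $X$. Once superadditivity is in hand, Fekete's Lemma finishes the proof in one step, in complete parallel with the $\emin$ calculation of Example~\ref{ex:asympt order of vanish}.
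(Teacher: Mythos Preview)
Your proof is correct and follows exactly the same route as the paper: establish superadditivity of $\emax(V_k,\fbul)$ and apply Fekete's Lemma \cite{Fek23} (the paper's proof is a one-line reference to this, together with \cite[Lemma 1.4]{BouChe11}). Your observation that multiplicativity of $\fbul$ is tacitly required is well taken---the paper's phrase ``arbitrary filtration'' is imprecise on this point, but the context and the cited reference make clear that multiplicative filtrations are intended.
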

\proof
   This follows by the superadditivity of the sequence $\left\{\emax(V_k,\fbul)\right\}$
   and again  Fekete's Lemma, see also \cite[Lemma 1.4]{BouChe11}.
\endproof
\begin{corollary}[Jumping numbers of Veronese algebra]
    Let $X$ be a normal
   projective variety and $\vbul$ a graded linear series. Fixing
   a positive integer $m$, the
   Veronese algebra $V_{m\bullet}$
   is a graded linear series as well. For a filtration
   $\fbul$ defined on $\vbul$ by a geometric valuation $\mu$ on $X$
   and the corresponding filtration $\calf_{m\bullet}$ on $V_{m\bullet}$, we have
   $$\emin(V_{m\bullet},\calf_{m\bullet})=m\emin(\vbul,\fbul)\;\mbox{ and }\;
     \emax(V_{m\bullet},\calf_{m\bullet})=m\emax(\vbul,\fbul).$$
\end{corollary}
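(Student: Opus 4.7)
The plan is to reduce the corollary to the existence of limits established in Example~\ref{ex:asympt order of vanish} and Lemma~\ref{lem:emax for graded lin ser}, both of which rely on Fekete's Lemma. Unpacking the definitions, one has $(V_{m\bullet})_k = V_{mk}$ with the induced filtration $(\calf_{m\bullet})_k = \fbul V_{mk}$, so jumping numbers at level $k$ of the Veronese algebra are simply the jumping numbers at level $mk$ of $\vbul$, namely $\emin((V_{m\bullet})_k, \calf_{m\bullet}) = \emin(V_{mk}, \fbul)$ and $\emax((V_{m\bullet})_k, \calf_{m\bullet}) = \emax(V_{mk}, \fbul)$.

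First, I would treat $\emin$. By Example~\ref{ex:asympt order of vanish}, subadditivity of the sequence $\nu(V_k) = \emin(V_k, \fbul)$ and Fekete's Lemma imply that the sequence $a_k \deq \frac{1}{k}\emin(V_k,\fbul)$ converges to $\emin(\vbul, \fbul)$ (in fact equals the infimum). Every subsequence of a convergent sequence has the same limit, so in particular $a_{mk} \to \emin(\vbul, \fbul)$. Rewriting,
\[
\emin(V_{m\bullet}, \calf_{m\bullet}) \equ \lim_k \frac{1}{k}\emin(V_{mk}, \fbul) \equ m \lim_k \frac{1}{mk}\emin(V_{mk}, \fbul) \equ m\, \emin(\vbul, \fbul).
\]

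Next, the $\emax$ case is entirely analogous, using Lemma~\ref{lem:emax for graded lin ser} in place of Example~\ref{ex:asympt order of vanish}: the superadditivity of $\emax(V_k, \fbul)$ together with Fekete's Lemma yields that $b_k \deq \frac{1}{k}\emax(V_k, \fbul)$ converges to $\emax(\vbul, \fbul)$, hence $b_{mk}$ does as well, and multiplying by $m$ gives the desired identity. Note that Lemma~\ref{lem:emax for graded lin ser} requires $V_k \neq 0$ for all $k$; this should be assumed (or noted) in the corollary, and under this hypothesis it clearly passes to the Veronese subalgebra since $V_{mk} \neq 0$ too.

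There is essentially no obstacle here: the only mildly subtle point is recognizing that the Veronese construction preserves the multiplicative structure of the filtration (hence super/subadditivity of the relevant sequences of jumping numbers), so that the hypotheses of Fekete's Lemma remain valid. Once this is observed, the statement reduces to the elementary fact that passing to the subsequence indexed by multiples of $m$ does not alter a limit, combined with the scaling factor $m$ coming from the re-indexing.
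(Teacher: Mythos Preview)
Your proposal is correct and follows exactly the approach the paper takes: the paper's proof consists of a single sentence invoking Example~\ref{ex:asympt order of vanish} and Lemma~\ref{lem:emax for graded lin ser}, and you have simply spelled out the details of that invocation (subsequence of a convergent sequence, rescaling by $m$). Your remark that the hypothesis $V_k\neq 0$ for all $k$ from Lemma~\ref{lem:emax for graded lin ser} is implicitly needed is a fair observation that the paper leaves unstated.
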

\begin{proof}
 It follows from Example \ref{ex:asympt order of vanish} and Lemma \ref{lem:emax for graded lin ser}
   that $\emin$ and $\emax$ scale well for graded subalgebras.
\end{proof}

   We get the following characterization of the maximal jumping
   number in case of a complete linear series.
\begin{remark}[Maximal jumping number of a complete linear series]\label{rem:max jumping number}
   Let $X$ be a normal projective variety, $Z$ an irreducible smooth
   subvariety of $X$. Let $D$ be a Cartier divisor on $X$
   and $\vbul=R(X,D)=\oplus_{k\geq 0} H^0(X,kD)$ be the section ring of $D$.
   Moreover let $\pi:Y\to X$ be the normalized blowing up of $Z$ with
   the exceptional divisor $E$.
   Then for $s\in H^0(X,kD)$ we have
   $$\ord_Z(s)=\ord_E(\pi^*s)=\max\left\{m\in\N:\; \div(\pi^*s)-mE\;\mbox{is effective}\right\}.$$
   Let $\fbul$ be the filtration on $\vbul$ induced by the order of vanishing along $Z$.
   Then it follows from Example \ref{ex:asympt order of vanish} that
   $$\emax(R(X,D),\fbul)=\sup \frac1k\max\left\{\ord_Z(s):\; s\in H^0(X,kD)\right\}=$$$$
      \sup\left\{t\geq 0:\; \pi^*L-tE\;\mbox{is big}\right\}=:\mu_E(\pi^*D)=:\mu(D,Z).$$
   Thus we see that $\emax$ is in this situation closely related to the geometry
   of the big cone on $Y$. Namely, it is the non-negative value of $t$ at which the ray
   $\pi^*(L)-tE$ intersects the boundary of the big cone.
\end{remark}


\section{Functions on Okounkov bodies}

   Functions on Okounkov bodies have been studied by Boucksom and Chen
   \cite{BouChe11} and Witt-Nystr\"om \cite{Nys09}. As their approaches
   differ, we present here briefly both of them, keeping in mind
   that we will be interested later on in continuous functions
   on Okounkov bodies. As Proposition~\ref{notcontinuous} shows,
   this is a quite delicate issue.

   We fix for duration of this section
   a  projective variety $X$ together with
   an admissible flag of subvarieties $\ybul:\, X=Y_0\supset\dots\supset Y_n$.

\subsection{Okounkov functions as concave envelopes}
   We begin with describing Witt-Nystr\"om's construction, in a slightly different way from \cite{Nys09}. 
   We recall first an auxiliary notion, see \cite[Section 7]{Rock}.
\begin{definition}[Closed concave envelope]\label{def:closed concave envelope}
   Let $\Delta\subset\R^n$ be a compact, convex set, and let $f:\Delta\to\R$
   be a bounded real valued function on $\Delta$. The \emph{closed concave envelope} $f^c$
of $f$ on $\Delta$ is defined by
   \[f^c(x)={\rm inf}\{g(x)| g\geq f, g\mbox{ concave and upper semi-continuous}\}.\]
   \end{definition}
   The closed
 concave envelope of a bounded function $f$ can be constructed as follows.
Let $H$ be the hypograph of $f$ in $\Delta\times \R$, let $H^c$ be the closed
convex hull of $H$ and define $f^c$ to be the unique function on $\Delta$ having $H^c$ as its hypograph, cf \cite{Rock}.

\begin{remark}\label{rmk:usc}
The function $f^c$ is concave and upper semi-continuous (since its hypograph is closed). From its concavity it
follows that $f^c$ is continuous in the interior of $\Delta$. Being
concave and upper-semi-continuous, it is continuous along any line segment lying
in $\Delta$.
\end{remark}

   From now on we work with a linearly bounded filtration $\fbul$ on $\vbul$
   (typically defined by a geometric valuation $\nu$ on the function field $\C(X)$).

We define a function $\wtilde{\phifbul}$ at points $v\in\dybul(\vbul)$ which
are   normalized valuation vectors by
\begin{equation}\label{eq:2}
   \wtilde{\phifbul}(v):=
      \lim_{k\to\infty}\frac1k\sup\left\{t\in\R:\; \exists s\in\calf_tV_k:\; \nuy(s)=k\cdot v\right\}.
\end{equation}This  limit exists because the sequence
   $$a_k:=\sup\left\{t\in\R:\; \exists s\in\calf_tV_k:\; \nuy(s)=k\cdot v\right\}$$
   is superadditive, i.e. $a_k+a_l\leq a_{k+l}$ for all $k,l\geq 1$.
   Indeed, let $\eps>0$ be fixed. There exist sections
   $$s_1\in\calf_{a_k-\eps/2}V_k\;\mbox{ and }\; s_2\in\calf_{a_k-\eps/2}V_l$$
   such that $\nuy(s_1)=kv$ and $\nuy(s_2)=lv$, so
$(s_1s_2)\in\calf_{a_k+a_l-\eps}V_{k+l}$ by the multiplicativity of the filtration
   and $\nuy(s_1s_2)=(k+l)v$. The existence of the limit now follows from
Fekete's Lemma \cite{Fek23}.


   In points $x$ which are not valuation vectors (in particular in such points
that do not belong to $\Delta_{\ybul}(\vbul)$) we set $\wtilde{\phifbul}(x):=0$.
   Thus the mapping $\wtilde{\phifbul}$ is defined on the whole space $\R^n$.
Now we are in a position to define
   the Okounkov function. 
\begin{definition}[Okounkov function 1]\label{def:of1}
   Using the above notation, we set
   $$\phifbul(x):=\wtilde{\phifbul}^c(x)$$
   for all $x\in\dybul(\vbul)$.
   We call this concave function the \emph{Okounkov function}
   associated to $\fbul$.

   If $\fbul$ is the filtration associated to a geometric valuation  $\nu$ of the function field of $X$, then we will also use the notation $\phi_\nu$ for $\phifbul$.
\end{definition}
   We observe now that taking concave envelope leaves the values of the
   underlying function $\wphifbul$ in normalized valuation vectors untouched.
\begin{lemma}
   For an arbitrary normalized
   valuation vector $v$ there is the equality
   $$\phifbul(v)=\wtilde{\phifbul}(v).$$
\end{lemma}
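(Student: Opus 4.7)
The plan is to prove the two inequalities $\phi_{\calf_\bullet}(v) \geq \widetilde{\phi}_{\calf_\bullet}(v)$ and $\phi_{\calf_\bullet}(v) \leq \widetilde{\phi}_{\calf_\bullet}(v)$ separately. The first is immediate from Definition~\ref{def:closed concave envelope}, since $\phi_{\calf_\bullet} = \widetilde{\phi}_{\calf_\bullet}^{c}$ is by construction a concave upper semi-continuous upper bound for $\widetilde{\phi}_{\calf_\bullet}$ on $\dybul(\vbul)$.

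The substance of the lemma is the reverse direction, and the core step is to exploit the multiplicativity of $\calf_\bullet$ to show that $\widetilde{\phi}_{\calf_\bullet}$ is concave when restricted to the set of normalized valuation vectors. Given two normalized valuation vectors $v_1, v_2$ and a rational coefficient $\lambda = p/q \in [0,1]$, for any $\eps > 0$ I would pick sections $s_i \in \calf_{t_i} V_{k_i}$ with $\nu_{\ybul}(s_i) = k_i v_i$ and $t_i/k_i \geq \widetilde{\phi}_{\calf_\bullet}(v_i) - \eps$ (such sections exist by the defining limit \eqref{eq:2}). Multiplicativity of $\calf_\bullet$ then forces the product $s_1^{p k_2}\cdot s_2^{(q-p) k_1}$ to lie in $\calf_{p k_2 t_1 + (q-p) k_1 t_2} V_{q k_1 k_2}$, and its $\nu_{\ybul}$-vector is $q k_1 k_2\bigl(\lambda v_1 + (1-\lambda) v_2\bigr)$. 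Dividing by $q k_1 k_2$ and sending $\eps \to 0$ gives
\[
\widetilde{\phi}_{\calf_\bullet}\bigl(\lambda v_1 + (1-\lambda) v_2\bigr) \;\geq\; \lambda\, \widetilde{\phi}_{\calf_\bullet}(v_1) + (1-\lambda)\, \widetilde{\phi}_{\calf_\bullet}(v_2),
\]
and an analogous monomial construction yields the same inequality for any finite rational convex combination of valuation vectors.

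From here I would deduce that the (open) concave hull $\widehat{\widetilde{\phi}}_{\calf_\bullet}$ coincides with $\widetilde{\phi}_{\calf_\bullet}$ at every valuation vector $v$. Terms in a convex combination $v = \sum \lambda_i x_i$ supported on non-valuation vectors contribute nothing to $\sum \lambda_i \widetilde{\phi}_{\calf_\bullet}(x_i)$, so only combinations supported at valuation vectors $v_i$ matter; then the rationality of $v$ and of each $v_i$ makes the set of admissible coefficient vectors a rational affine slice in which rational coefficients are dense, and the previous paragraph's concavity estimate passes to arbitrary real coefficients by taking a limit. Finally, $\phi_{\calf_\bullet}$ agrees with the upper semi-continuous envelope of $\widehat{\widetilde{\phi}}_{\calf_\bullet}$ (cf.\ Remark~\ref{rmk:usc}), and the equality $\phi_{\calf_\bullet}(v) = \widetilde{\phi}_{\calf_\bullet}(v)$ follows because the hypograph is already closed above $v$. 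The step I expect to require the most care is precisely this last passage to the upper semi-continuous envelope when $v$ lies on the boundary of $\dybul(\vbul)$: one has to rule out the possibility that $\limsup_{x \to v} \widehat{\widetilde{\phi}}_{\calf_\bullet}(x) > \widetilde{\phi}_{\calf_\bullet}(v)$, which is settled by combining the multiplicativity of $\calf_\bullet$ with compactness of $\dybul(\vbul)$ to extract a limiting section whose filtration index realizes the required $\limsup$.
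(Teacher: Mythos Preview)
Your concavity argument via multiplicativity of $\calf_\bullet$ is essentially the paper's proof; the paper establishes only the midpoint inequality
\[
\tfrac12\,\wtilde{\phifbul}(u)+\tfrac12\,\wtilde{\phifbul}(v)\;\le\;\wtilde{\phifbul}\!\left(\tfrac{u+v}{2}\right)
\]
and declares that this suffices, with no further discussion of the closure. Your extension to arbitrary rational $\lambda$ and your reduction of the open concave hull to convex combinations supported on valuation vectors are correct refinements of the same idea.

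The genuine gap is your final step. You correctly recognize that passing from the open concave hull to the \emph{closed} concave envelope could in principle raise the value at a boundary valuation vector $v$: one must rule out $\limsup_{x\to v}\widehat{\wtilde{\phifbul}}(x)>\wtilde{\phifbul}(v)$, and midpoint (or rational) concavity on a dense set does not by itself exclude this. But your proposed resolution---``extract a limiting section whose filtration index realizes the required $\limsup$''---does not work. The sections witnessing large values of $\wtilde{\phifbul}$ at nearby valuation vectors $v_k$ live in graded pieces $V_{m_k}$ with $m_k\to\infty$, and there is no compactness that produces from them a section in some $V_m$ with valuation vector exactly $mv$; compactness of $\Delta_{\ybul}(\vbul)$ is a statement about points of $\R^n$, not about sections. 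The paper's own proof simply does not confront this issue, so your argument is no weaker than the paper's---but the mechanism you invoke to close the gap is not valid as stated.
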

\proof
   It suffices to show  that the function $\wphifbul$ is
   ''concave'' on the normalized valuation vectors.
   To this end, it suffices to show
   \begin{equation}\label{eq:concave pre function}
      \frac12\wphifbul(v)+\frac12\wphifbul(u)\leq \wphifbul\left(\frac12u+\frac12v\right)
   \end{equation}
   for all normalized valuation vectors $u$ and $v$. Note that it follows
   from the proof of Lemma \ref{lem:density of valuation} that $\frac12(u+v)$
   is again a normalized valuation vector.
   \\
   Let $\eps>0$ be fixed.
   It follows from the discussion right after \eqnref{eq:2}
   that the limit in \eqnref{eq:2} is actually a supremum.
   Hence there exist numbers $k,l\in\N$ and $t_1,t_2\in\R$, as well as sections
   $s_1\in\calf_{t_1}V_k$, $s_2\in\calf_{t_2}V_l$ such that
   $\nuy(s_1)=ku$, $\nuy(s_2)=kv$ and
   $$\frac{t_1}{k}>\wphifbul(u)-\eps\; \mbox{ and }\; \frac{t_2}{l}>\wphifbul(v)-\eps.$$
   Then for $s=s_1^l\cdot s_2^k$ we have
   $$s\in V_{2kl}\; \mbox{ and }\; \nuy(s)=2lk\left(\frac12u+\frac12v\right).$$
   Moreover $s\in\calf_{lt_1+kt_2}V_{2kl}$ by the multiplicity of $\calfbul$.
   Hence
   $$\wphifbul\left(\frac12u+\frac12v\right)\geq\frac{lt_1+kt_2}{2lk}>
     \frac12\wphifbul(u)+\frac12\wphifbul(v)-\eps$$
   which implies \eqnref{eq:concave pre function}.
\endproof

\begin{remark}
In \cite{Nys09}, Witt-Nystr\"om  actually uses the following version of the above construction. For $(v,k)$ in the graded semigroup $\Gamma_{Y_\bullet}(V_\bullet)$ he sets
$$
f(v,k):=\sup\left\{t\in\R:\; \exists s\in\calf_tV_k:\; \nuy(s)=v\right\},
$$
which defines a super-additive function on $\Gamma_{Y_\bullet}(V_\bullet)$. Writing each $v\in\mathring{\Delta}_{Y_\bullet}(V_\bullet)$ as the limit of a sequence of the form $\e_k v_k$ with $\e_k\to 0_+$ and $(k,v_k)\in\Gamma_{Y_\bullet}(V_\bullet)$, he then proves that 
$$
\hat f(v):=\lim_{k\to\infty}\e_k f(v_k,k)
$$
exists in $\R$, only depends on $v$, and defines a concave function on $\mathring{\Delta}_{Y_\bullet}(V_\bullet)$. His arguments provide a several variable version of the classical 'Fekete lemma', and are presented in the Appendix for the convenience of the reader. 

When $v\in\mathring{\Delta}_{Y_\bullet}(V_\bullet)$ is a valuation vector, the definitions combined with the above lemma yield
$$
\hat f(v)=\wtilde{\phifbul}(v)=\phifbul(v), 
$$
and it follows by density that $\hat f$ and $\varphi_{\fbul}$ coincide on $\mathring{\Delta}_{Y_\bullet}(V_\bullet)$.
\end{remark}

\begin{remark} Keeping the notation from above, let $\fbul$ be the valuation obtained from a geometric valuation $\nu$, let $D$ be a big divisor. Then
 \[
  \inf_{\Delta_{\ybul}(D)} \phi_\nu \geq  \nu(\| D\|)\ ,
 \]
where $\nu(\| D\|)$ denotes the asymptotic value of $\nu$ on $D$ as defined in \cite[Section 2]{AIBL}.
\end{remark}

\begin{remark}\label{rmk:okfn usc}
 It is an obvious but important consequence of Remark~\ref{rmk:usc} that Okounkov functions are upper-semi-continuous.
\end{remark}

\begin{proofof}{Theorem~\ref{thm:cont}, Part (i)}
According to \cite[Proposition 3]{Howe}, all non-negative concave upper-semi-continuous functions are continuous on locally polyhedral subsets of $\R^n$. In
particular, if $\Delta_{\ybul}(\vbul)$ is a polytope (no matter whether it is rational or not), then all Okounkov non-negative Okounkov functions are
automatically continuous on the whole of $\Delta_{\ybul}(\vbul)$.

This latter statement includes in particular all Okounkov functions coming from geometric valuations.
\end{proofof}

\subsection{Okounkov functions via graded linear series}

   Here we recall the construction by Boucksom and Chen.
   Let $\fbul$ be a multiplicative
   filtration on the graded linear series $\vbul$. Then, for any
$t\in\R$, we can define  a new graded linear series $\vbul^{(t)}$ via
   \begin{equation}\label{eq:veronese filtr}
      V^{(t)}_k:=\calf_{tk}V_k
   \end{equation}
   for all $k$.
The Okounkov bodies $\dybul\left(\vbul^{(t)}\right)$ form a non-increasing
family of compact
   convex subsets of $\dybul(\vbul)$ and they have been used by Boucksom and Chen \cite[Definition 1.8]{BouChe11}
   in order to define functions
   on Okounkov bodies.

\begin{definition}[Okounkov function 2]\label{def:of2}
   With notation as above,  put
   $$\psifbul(x) \deq \sup\left\{t\in\R:\; x\in \dybul\left(\vbul^{(t)}\right)\right\}.$$
   for all $x\in\dybul(\vbul)$
   and call this function also the \emph{Okounkov function}
   associated to $\fbul$.
\end{definition}
The following lemma states that these two definitions are equivalent.
\begin{lemma}
   The definitions \ref{def:of1} and \ref{def:of2} are equivalent on $\dybul(\vbul)$, i.e.
   $$\phifbul(x)=\psifbul(x)$$
   for all $x\in\dybul(\vbul)$.
\end{lemma}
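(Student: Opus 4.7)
I plan to establish the two inequalities $\phifbul\le\psifbul$ and $\psifbul\le\phifbul$ separately, using the two standard descriptions of $\phifbul=\wtilde{\phifbul}^c$: as the smallest concave upper-semi-continuous function dominating $\wtilde{\phifbul}$, and as the function whose hypograph is the closed convex hull of the hypograph of $\wtilde{\phifbul}$.

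\textbf{First inequality $\phifbul\le\psifbul$.} Write $G(s):=\dybul(\vbul^{(s)})$; these form a decreasing family of compact convex subsets of $\dybul(\vbul)$. The key point is that $\psifbul$ is itself concave and upper-semi-continuous. For concavity, the core step is: given normalized valuation vectors $v_1,v_2$ of $\vbul^{(t_1)},\vbul^{(t_2)}$ represented by sections $s_i\in\calf_{t_ik_i}V_{k_i}$, and integers $0\le p\le q$, the product $s_1^{pk_2}s_2^{(q-p)k_1}\in\calf_{k_1k_2(pt_1+(q-p)t_2)}V_{qk_1k_2}$ (by multiplicativity of $\fbul$) exhibits $(p/q)v_1+(1-p/q)v_2$ as a normalized valuation vector of $\vbul^{((p/q)t_1+(1-p/q)t_2)}$. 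Combined with density of valuation vectors (Lemma \ref{lem:density of valuation}), monotonicity $G(s')\subseteq G(s)$ for $s'\ge s$, closedness of each $G(s)$, and one-sided rational approximation of an arbitrary real $\lambda\in[0,1]$, this yields $\psifbul(\lambda x+(1-\lambda)y)\ge\lambda\psifbul(x)+(1-\lambda)\psifbul(y)$. Upper-semi-continuity follows from closedness of each $G(s)$: if $(x_n,t_n)\to(x,t)$ with $t_n\le\psifbul(x_n)$, then for each $s<t$, eventually $x_n\in G(s)$, whence $x\in G(s)$, giving $\psifbul(x)\ge t$. Finally, $\psifbul\ge\wtilde{\phifbul}$ on the whole body: at a normalized valuation vector $v$, any $t<\wtilde{\phifbul}(v)$ is witnessed by a section $s\in\calf_{tk}V_k$ with $\nuy(s)=kv$, showing $v\in G(t)$; at other points, $\wtilde{\phifbul}=0\le\psifbul$. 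The characterization of $\phifbul$ as the smallest concave upper-semi-continuous majorant of $\wtilde{\phifbul}$ now yields $\phifbul\le\psifbul$.

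\textbf{Second inequality $\psifbul\le\phifbul$.} Fix $x\in\dybul(\vbul)$ and $t<\psifbul(x)$, so that $x\in G(t)$. By Lemma \ref{lem:density of valuation}, $x$ is a limit of convex combinations of normalized valuation vectors $v$ of $\vbul^{(t)}$, each satisfying $\wtilde{\phifbul}(v)\ge t$ by the argument above. Hence every $(v,t)$ lies in the hypograph of $\wtilde{\phifbul}$; passing to convex combinations and then to the limit places $(x,t)$ in the closed convex hull of this hypograph, which by Definition \ref{def:closed concave envelope} is exactly the hypograph of $\phifbul$. Therefore $\phifbul(x)\ge t$, and letting $t\to\psifbul(x)^-$ yields $\phifbul(x)\ge\psifbul(x)$.

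\textbf{Main obstacle.} The most delicate step is the concavity of $\psifbul$. The multiplicativity of the filtration naturally yields only rational convex combinations of normalized valuation vectors; upgrading to arbitrary real convex combinations of arbitrary points of $G(s)$ demands the careful interplay of density, monotonicity, and one-sided rational approximation of $\lambda$ outlined above.
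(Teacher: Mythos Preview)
Your proof is correct, and its overall strategy is close in spirit to the paper's, but the packaging is genuinely different. The paper argues entirely through hypographs: it introduces an auxiliary set $H_1=\{(x,y)\mid x\text{ a normalized valuation vector, }\exists\,s\in\calf_{yk}V_k\text{ with }\nuy(s)=kx\}$, then shows that both the hypograph $H$ of $\wtilde{\phifbul}$ and the hypograph $H_3$ of $\psifbul$ are squeezed between $H_1$ and $\clconv(H_1)$, and concludes by observing that both hypographs are closed and convex, hence both equal to $\clconv(H_1)$. By contrast, you split into two inequalities, invoking respectively the minimal-majorant description and the closed-convex-hull description of the concave envelope. A notable difference is that the paper simply \emph{asserts} that $\psifbul$ is concave and upper-semi-continuous (so that $H_3$ is closed and convex), whereas you actually supply the argument from multiplicativity of the filtration, density of valuation vectors, and one-sided rational approximation. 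In this sense your version is more self-contained; the paper's hypograph-chasing is terser but leaves the concavity of $\psifbul$ to the reader (or to \cite{BouChe11}). One small point: your claim that $\wtilde{\phifbul}=0\le\psifbul$ at non-valuation-vectors tacitly uses $e_{\min}\ge 0$; the paper's inclusion $H\subset\cl(H_1)$ has the same hidden hypothesis, so this is not a defect relative to the original, and it holds in the geometric-valuation setting the paper cares about.
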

\proof
In what follows, we will denote the closed convex closure of a subset of $S\subset \R^n$
by $\clconv(S)$. Consider the set
\[ H_1=\{(x,y)| x\mbox{ a normalised valuation vector }, \exists v \mbox{ s.t. }
\nu(v)= x, \mbox{val}(v)\geq y\} \dsubseteq \Delta\times\R\ .
\]
Note that by definition
\[
\Delta_t \deq  \mbox{ closed convex hull }(H_1\cap \{\Delta \times t\})\ .
\]
In particular, if we consider the set $H_2\subset \Delta\times \R$ defined by
\[
H_2\cap \{\Delta\times t\} \equ  \mbox{ closed convex hull}(H_1\cap \{\Delta\times t\})
\]
then we have that
\[
\psifbul(x) \equ \sup\left\{ t\mid  (x,t)\in H_2\right\}\ .
\]
Observe  that $H_1\subset H_2\subset {\rm clconv}(H_1)$.

Let $H_3$ be the hypograph of $\psifbul$: we then have that
$H_2\subset H_3\subset \cl(H_2)$. Moreover, $H_3$, as the hypograph of an
upper semi-continuous concave function is automatically closed and convex,
so $H_3= \cl(H_2)$, and this closure is a convex set. In particular,
we have that $H_3=\cl(H_2)= \clconv(H_1)$.
Let $H$ be the hypograph of $\tilde \phifbul$, so that ${\rm clconv}(H)$ is the hypograph of
$\phifbul$. It is immediate from the definition
that
\[ H_1\subset H \subset {\rm cl}(H_1)\]
and hence ${\rm clconv}(H)={\rm clconv}(H_1)=H_3$. The hypograph
$\phifbul$ is therefore equal to $H_3$, which is the hypograph of
$\psifbul$. These two functions are therefore equal.
\endproof

\subsection{An example of a non-continuous Okounkov function}

Concave envelopes are in general only upper semicontinuous on the boundary. In the absence of good geometric properties of $\Delta_{\ybul}(\vbul)$ (cf.
Theorem~\ref{thm:cont} (i)),  there is no guarantee that Okounkov functions defined on $\Delta_{\ybul}(\vbul)$ will be continuous. Here we show by example that
such a situation indeed can occur.

First, the following Proposition gives a sufficient condition for non-continuous
   behavior of an Okounkov function. After its proof we present
   an example where the circumstances described  do happen.

\begin{proposition}[A non-continuity criterion]\label{notcontinuous}
    Let $X$ be a variety, $\ybul:\; X=Y_0\supset Y_1\supset\ldots\supset Y_n$ a flag on $X$
    and $D$ a divisor on $X$.
    Let $\dybul(D)$ be the Okounkov body of $D$ with respect to this flag and let
    $p$ be a point in the boundary of $\Delta(D)$ such that $p= \nu(s)$, where $s$
    is a section in $H^0(X,D)$ defining a reduced irreducible divisor $Y$ and $\nuy$
    is the multivaluation associated to the flag $\ybul$.
    Let $v$ be the
    valuation associated to $Y$, i.e. $v=\ord_Y$. \\
If $\dybul(D)$ is not locally a cone around $p$, then the Okounkov
function $\varphi_v$ associated to the valuation $v$ is
not continuous at the point $p$.
\end{proposition}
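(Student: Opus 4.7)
The strategy is to show $\varphi_v(p) = 1$ while producing a sequence $q_n \to p$ on $\partial \Delta_{\ybul}(D)$ along which $\varphi_v$ vanishes. The key auxiliary object is the function
\[
\alpha_{\max}(q) := \max\left\{\alpha \in [0,1] : q \in \alpha p + (1-\alpha)\Delta_{\ybul}(D)\right\},
\]
which records the largest coefficient with which $q \in \Delta_{\ybul}(D)$ can be written as a convex combination having $p$ as one vertex. The plan is to compute $\varphi_v(p)$, prove $\varphi_v \leq \alpha_{\max}$ on the whole body, and finally exhibit $q_n \to p$ forcing $\alpha_{\max}(q_n) = 0$.

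For the value at $p$: since $s \in H^0(X, D)$, the point $p = \nu_{\ybul}(s)$ is itself a normalized valuation vector, so $\varphi_v(p) = \wtilde{\varphi_v}(p)$ by the lemma following Definition~\ref{def:of1}. Testing against $s^k$ yields $\wtilde{\varphi_v}(p) \geq 1$. Conversely, any $\sigma \in H^0(X, kD)$ with $m := \ord_Y(\sigma)$ factors as $\sigma = s^m \tau$ with $\tau \in H^0(X, (k-m)D)$, and since $D \sim Y$ is effective this forces $m \leq k$. Hence $\wtilde{\varphi_v}(p) = 1$, giving $\varphi_v(p) = 1$.

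For the global upper bound: applying the same factorization when $\nu_{\ybul}(\sigma) = kq$ and $m = \ord_Y(\sigma)$ gives $\nu_{\ybul}(\tau) = kq - mp$, so the point $(q - (m/k)p)/(1 - m/k)$ lies in $\Delta_{\ybul}(D)$, whence $m/k \leq \alpha_{\max}(q)$. This promotes to $\wtilde{\varphi_v}(q) \leq \alpha_{\max}(q)$ at valuation vectors, and trivially elsewhere in the body, where $\wtilde{\varphi_v}$ vanishes. A direct convex-combination calculation shows that $\alpha_{\max}$ is concave, and its super-level sets $\{\alpha_{\max} \geq c\} = cp + (1-c)\Delta_{\ybul}(D)$ are closed, so $\alpha_{\max}$ is upper semi-continuous. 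By Definition~\ref{def:closed concave envelope}, $\varphi_v \leq \alpha_{\max}$ holds throughout $\Delta_{\ybul}(D)$.

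Finally, consider the radial function $\rho(u) := \sup\{t \geq 0 : p + tu \in \Delta_{\ybul}(D)\}$ on unit vectors. The hypothesis that $\Delta_{\ybul}(D)$ is not locally a cone around $p$ forces $\inf\{\rho(u) : \rho(u) > 0\} = 0$, for otherwise $\Delta_{\ybul}(D)$ would coincide with its tangent cone at $p$ in a small ball. Choosing unit $u_n$ with $\rho(u_n) \to 0$ and setting $q_n := p + \rho(u_n) u_n$ produces boundary points converging to $p$ for which the ray from $p$ through $q_n$ exits the body exactly at $q_n$; hence $\alpha_{\max}(q_n) = 0$, and the upper bound gives $\varphi_v(q_n) = 0 \neq 1 = \varphi_v(p)$. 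The main delicate point is transferring the pointwise bound on valuation vectors to a bound on the concave envelope, which works only because $\alpha_{\max}$ itself is concave and upper semi-continuous, and because the linear equivalence $Y \sim D$ guarantees that the residual sections $\sigma/s^m$ are regular on $(k-m)D$.
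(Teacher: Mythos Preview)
Your proof is correct and follows essentially the same strategy as the paper. The paper works with the Boucksom--Chen description and asserts directly that $\Delta_t(D)=tp+(1-t)\dybul(D)$, so that $\psi_{\fbul}$ (hence $\varphi_v$) \emph{equals} your function $\alpha_{\max}$; your argument instead stays with the Witt-Nystr\"om formulation, proving only the inequality $\varphi_v\le\alpha_{\max}$ via the factorization $\sigma=s^m\tau$ and the envelope characterization, and handling $\varphi_v(p)=1$ separately. In effect you have re-derived the relevant half of the identity $\Delta_t=tp+(1-t)\Delta$ rather than quoting it, which makes your write-up a bit longer but also more self-contained; the construction of the bad sequence $q_n$ from the ``not locally a cone'' hypothesis is identical in both arguments.
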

\begin{proof}
Let us consider the Okounkov bodies $\Delta_t(D)$ associated to the filtration
given by the valuation $v$.
We have that $\Delta_t(D)= t\nuy(s)+ (1-t) \dybul(D)$ for $t\in[0,1]$ and
$\Delta_t(D)=\emptyset$ if $t>1$.
In other words, if $t\in[0,1]$ then
$\Delta_t(D)$ is produced from $\dybul(D)$ by performing on
$\dybul(D)$
a homothety of ratio $(1-t)$ centered at the point $p=\nuy(s)$. \\ \\
In particular,
$p\in \Delta_t(D)$ for all $t\in[0,1]$ and hence
\[\varphi_v(p)=\sup \{ t:\; p\in
\Delta_t(D)\}=1.\]
Since $\dybul(D)$ is not locally a cone around $p$ we can find a sequence of
points $p_i$ contained in the boundary $\partial\dybul(D)$ such that
\begin{enumerate}
\item $\lim_{i\rightarrow \infty} p_i=p$;
\item For all integers $i$ and $t>0$ we have that
$p+(1+t)(p_i-p)\not\in \dybul(D)$.
\end{enumerate}
In other words, the line passing through $p$ and $p_i$ leaves the Okounkov body
$\dybul(D)$ exactly at the point $p_i$. In particular, it follows that
$p_i\not\in \Delta_t(D)$ for any $t>0$ so that $\phi_v(p_i)=0$. It follows that
$\phi_v$ is not continuous at the point $p$. This completes the proof of
Proposition \ref{notcontinuous}.
\end{proof}

We will now produce a threefold $X$ along  with an admissible flag
$X=Y_0\supset Y_1\supset Y_2\supset Y_3$, a divisor $D$ on $X$ and a section $s$ of $D$, such that
$\nu(s)$ lies in the round part of the boundary of $\Delta(D)$.

Our example comes from \cite{KLM}, which is in turn  heavily based on earlier work of  Cutkosky \cite{C}.
The first part of the discussion is taken from \cite{KLM} almost verbatim.

In \cite{C}, Cutkosky constructs a quartic
surface $S\subseteq\mathbb{P}^3$ whose N\'eron-Severi space $N^1(S)$ is isomorphic to
$\mathbb{R}^3$ with the lattice $\mathbb{Z}^3$ and the intersection form $q
(x,y,z)= 4x^2-4y^2-4z^2$. He shows that

\begin{enumerate}
\item The divisor class $(1,0,0)$  on $S$ corresponds to a very ample divisor
class $[L]$
and the projective embedding  corresponding to $L$ realizes  $S$ as a quartic
surface in $\mathbb{P}^3$.
\item The nef and effective cones of $S$ coincide, and are  given by the
conditions
\[
v^2\geq 0\ ,\  ([L]\cdot v)> 0\ .
\]
\end{enumerate}

Now, take the nef class $\alpha \deq (1,1,0)\in N^1(S)$, and let $C$ be a
curve with class $[C]=\alpha$. We note that since the effective cone of $S$ has no
polyhedral part, any curve $C$ on $S$ such that $C^2=0$ and $\frac1k[C]$ is not
integral for any $k>1$, is automatically irreducible. In this case, all
members of the linear series of $C$ are irreducible.

Since  $C^2=0$,  Riemann-Roch implies that $\chi(C)=2$ hence
$h^0(C)+h^2(C)= h^0(C)+ h^0(-C)\geq 2$.
As  $(L\cdot (-C))=-4$, we know that $h^0(-C)=0$ and it follows that
$h^0(C)\geq 2$. There is therefore a pencil of curves  on $S$ with the class $\alpha$,
no two different elements of  this pencil  meet because $\alpha^2=0$
and all members of the pencil are irreducible.
This pencil is hence  base point free and its general element is
smooth by Bertini theorem. A general
element $C\subset \P^3$ of this pencil is then a smooth elliptic curve of
degree 4.

   Let $X$ to be the blow-up of $\mathbb{P}^3$ along the curve $C$. We denote
by $Y_1\subset X$ the proper transform of $S$ in $X$.
We note that $Y_1$ is
isomorphic under projection to $S$.

We now choose a sufficiently positive ample divisor $D$ on $X$, such that
$D|_{Y_1}$ and $Y_1|_{Y_1}$ are independent in the Picard group of $Y_1$.
   Moreover we can assume that
   all of the following divisors
   are ample:
\begin{equation}\label{eq:positive divisors}
   D\, , \, \mbox{ }D-Y_1\, , \, \mbox{ }D-Y_1-K_X\, , \, \mbox{ } D-2Y_1-K_X\ .
\end{equation}
   Furthermore, we choose a curve $C'$ on $Y_1=S$ such that $[C']$ is
a primitive integral member of the boundary of
${\rm Eff}(Y_1)$. (The class $[C']$ is effective by the Riemann-Roch argument
given above). Moreover, we assume
that $C'$ is not contained in the image of the restriction map from ${\rm Pic}(X)$ to
${\rm Pic}(Y_1)$. We note that this implies that
\begin{equation}\label{eq:not in plane}
   [D|_{Y_1}], [Y_1|_{Y_1}] \;\mbox{ and }\; [C'] \;\mbox{ are independent in }\; NS(Y_1).
\end{equation}
    Finally, we pick $Y_2$ to be
a smooth curve contained in the class $D|_{Y_1}-C'$ and pick $Y_3$ to be a
general point on $Y_2$.

\begin{proofof}{Theorem~\ref{thm:cont}, (ii)}
 With $X$, $Y_1,Y_2,Y_3$ and $D$ as above, we now show that there is a reduced and irreducible
divisor $Z$ on $X$, linearly equivalent to $D$, such that close to the point
$\nu(Z)\in\Delta(D)$ the set $\Delta(D)$ is not locally a cone. Here $\nu$
   denotes the $3$--valuation determined by the flag $\ybul$.

   Since $D$ and  $D-Y_1-K_X$ are both ample by \eqnref{eq:positive divisors},
   the restriction map on global sections $H^0(D)\rightarrow H^0(D|_{Y_1})$ is
surjective, and indeed so is $H^0(kD)\rightarrow H^0(kD|_{Y_1})$
for any $k$.

We can therefore choose a section of $D$ determining a divisor $Z$
not vanishing along $Y_1$ and such that $Z|_{Y_1}= Y_2\cup C'$. By generality of $Y_3$
we then have
\[
\nu(Z) \equ (0,1,0)\ .
\]
Let us show now that the divisor $Z$ is reduced and irreducible. If not then
we can write $Z$ as a sum of non-zero effective divisors
\[
Z \equ  Z'+Z''\ .
\]
   Then $Z'|_{Y_1}$ and $Z''|_{Y_1}$ are non-zero effective divisors and
$(Z'+Z'')|_{Y_1}= C'+Y_2$, where $C'$ and $Y_2$ are both irreducible.
Without loss of generality $Z'|_{Y_1}=C'$, but this
contradicts our assumption that $C'$ is not a restriction of a divisor on
$X$.

Let us now show that $\Delta(D)$ is not locally a cone at $(0,1,0)$.
We consider
\[
\Delta'(D) \equ  \{(a,b,c)| 0\leq a\leq 1, (a,b,c)\in \Delta(D)\}
\]
i.e. we consider a part of $\Delta(D)$ with $a$ sufficiently small.
   From \eqnref{eq:positive divisors} it follows that for any $k$ and any $a\in[0,1]$ such that $ka
\in \mathbb{N}$ the mapping
\[
H^0(k(D-aY_1))\rightarrow H^0(k(D-aY_1)|_{Y_1})
\]
is surjective. It follows that for any $a\in[0,1]$
\[\Delta(D)\cap \{(a,-,-)\}= \{ (a, b,c)| (b,c)\in \Delta((D-aY_1)|_{Y_1})\}.\]
In other words, the slice of the Okounkov body $\Delta(D)$ with the plane
$(a,-,-)$ is just the Okounkov body of $(D-aY_1)|_{Y_1}$ on $Y_1$. \\ \\
As $Y_1$ is a surface with no negative curves, the description of its Okounkov
bodies given in \cite[Theorem 6.4]{LazMus09} is then very simply
\[ \Delta(D-aY_1)|_{Y_1}=\{ (b,c)| (D-aY_1)|_{Y_1}-bY_2 \mbox{ effective },
0\leq c\leq (D-aY_1-bY_2)\cdot Y_2\}\]
or in other words
\[
 \Delta'(D-aY_1)=\{ (a,b,c)| 0\leq a\leq 1,f_1\geq 0,f_2>0,0\leq c\leq f_3\}\ ,
\]
where
\begin{eqnarray*}
 f_1 & = & (D|_{Y_1}-aY_1|_{Y_1}-bY_2)^2 \ ,\\
f_2 & = &  (D|_{Y_1}-aY_1|_{Y_1}-bY_2) \cdot L\ ,\\
f_3 & = & (D|_{Y_1}-aY_1|_{Y_1}-bY_2) \cdot Y_2\ .
\end{eqnarray*}

For simplicity, let us now consider the slice
\[\Delta''(D)=\{ (a,b)| (a,b,0)\in \Delta_\epsilon(D)\}\]
obtained by intersecting $\Delta'(D)$ and the plane $c=0$. Alternatively,
we can write
\[  \Delta''(D)=\{ (a,b)|0\leq a \leq 1,(D|_{Y_1}-aY_1|_{Y_1}-bY_2)^2\geq
0 \\ \mbox{ and }(D|_{Y_1}-aY_1|_{Y_1}-bY_2) \cdot L>0 \}.\]
It will be enough to show
that $\Delta''(D)$
is not locally a cone around the point $(0,1)$. Recall that
any cone in $\mathbb{R}^2$ is either the whole of $\mathbb{R}^2$ or is bounded
by two straight half-lines. $(0,1)$ is not an interior point of
$\Delta''(D)$ so the first possibility is excluded. \\ \\
The set $\Delta''(D)$ is bounded by the following curves:
\begin{enumerate}
\item the $x$-axis,
\item the $y$-axis,
\item the line $x=\epsilon$,
\item the branch of the conic section defined by the equation
\[(D|_{Y_1}aY_1|_{Y_1}-bY_2)^2=0.\]
passing through $(0,1)$.
\end{enumerate}
The point $(0,1)$ lies at the intersection of the $y$-axis and the conic
section defined by the equation $(D|_{Y_1}-aY_1|_{Y_1}-bY_2)^2=0$.\\ \\
To establish that $\Delta''(D)$ is not locally a cone around $b$ it will be
enough to show that the conic section given by the equation $(D|_{Y_1}-
aY_1|_{Y_1}-bY_2)^2=0$
does not contain a straight line. This conic section is the intersection in
${\rm Pic}(Y_1)$ of the
nef cone $x^2=y^2+z^2$ with the plane passing through the points $D|_{Y_1}$, $(D-Y_1)|_{Y_1}$ and
$D|_{Y_1}-Y_2$. By \eqnref{eq:not in plane} this plane does not pass through $0$ so the resulting
conic section is not a union of straight lines.
This completes the proof of Theorem~\ref{thm:cont}.
\end{proofof}

\subsection{Examples}

We devote this section to several examples where functions associated to various geometric valuations are determined explicitly.
First we deal with the one-dimensional case, where Okounkov functions associated to complete linear series can be computed in general.

\begin{example}[Okounkov function of a valuation on a curve]\rm
   Let $C$ be a smooth curve, $\vbul$ a big graded linear system
   associated to a line bundle $L$ of positive degree,  and let
   $$\ybul:\; C\supset\left\{p\right\}$$
   be a fixed flag.\\
   a) Consider the filtration $\fbul=\ord_p$ on $\vbul$ defined by the order
   of vanishing at the point $p$ in the flag.

   Let $x\in\Delta_{\ybul}(\vbul)$ be arbitrary, and write it as  a limit
   of normalized valuation vectors $x=\lim_{k\to\infty}\frac{\alpha_k}{k}$. Then
\begin{eqnarray*}
 \phi_{\ord_p}(x) & = & \lim_{k\to \infty}\frac1k\sup\left\{t\in\R:\;
   \exists s\in V_k:\; \ord_p(s)\geq t\mbox{ and } \ord_p(s)=k\alpha_k\right\} \\
& = & \lim_{k\to\infty}\frac1k\alpha_k \\
& = & x\ .
\end{eqnarray*}
   It turns out  that in this case the Okounkov function is the identity.\\
   b)  Now  consider the filtration $\fbul=\ord_q$ defined by the
   order of vanishing in a point $q$ not in the flag. In this case, we take $\vbul$ to be the complete graded linear series associated to the divisor $L$.
   At a point $x=\lim_{k\to\infty}\frac{\alpha_k}{k}$ as above, we have
\begin{eqnarray*}
 \phi_{\ord_q}(x) & = & \lim_{k\to \infty}\frac1k\sup\left\{t\in\R:\;
   \exists s\in V_k:\; \ord_q(s)\geq t\mbox{ and } \ord_p(s)=k\alpha_k\right\} \\
& = & \lim_{k\to\infty}\frac1k(k\deg(L)-\alpha_k) \\
& = & \deg (L) -x\ .
\end{eqnarray*}
\end{example}

Next, we move on to the surfaces, where calculations become very difficult very soon. This is not surprising, since invariants of Okounkov functions on
surfaces already carry  deep geometric information (see \cite{KMS}).

\begin{example}[Okounkov function of a valuation on the projective plane]\label{ex:ok f on p2}\rm
   Set $X_0=\P^2$, $D_0=\calo_{\P^2}(1)$, and let $P_0\in\ell\subset X_0$ be a flag
   as in Example \ref{ex:ok of p2 and blow up}.\\
   a). First, we handle the case  $\nu=\ord_{P_0}$. In the
   rational points $(a,b)\in\Delta(D_0)$ the Okounkov function $\phi^0$ is then
\begin{eqnarray*}
\phi^0(a,b) & = & \lim_{k\to\infty}\frac1k\sup\left\{t\in\R\mid\exists s\in|kD_0|:\;
   \ord_{\ell}(s)=ka,\; \ord_{P_0}(s_1)=kb,\right. \\
   && \left. \ord_{P_0}(s)\geq t\right\} \\
& = &  \lim_{k\to\infty}\frac1k k(a+b) \\
& = & (a+b)\ ,
\end{eqnarray*}
   where $s_1$ is defined as in \eqnref{eq:s1}. As the Okounkov body $\Delta(D_0)$ is a polytope, $\phi^0$ is continuous by Theorem~\ref{thm:cont}, hence
$\phi^0(a,b)=a+b$
   for all $(a,b)\in\Delta(D_0)$. We point out that using the definition of Boucksom and Chen, one can obtain the result without referring to the continuity of
$\phi^0$. \\
   b) Now we consider $\nu=\ord_{P_1}$ for a point $P_1$ not on the line $\ell$.
   For the rational points $(a,b)\in\Delta(D_0)$ we have
\begin{eqnarray*}
\phi^1(a,b) & = & \lim_{k\to\infty}\frac1k\sup\left\{t\in\R:\; \exists s\in|kD_0|:\;
   \ord_{\ell}(s)=ka,\; \ord_{P_0}(s_1)=kb,\right. \\
   && \left. \ord_{P_1}(s)\geq t\right\} \\
&  =  & \lim_{k\to\infty}\frac1k k(1-a) \\
& = & 1-a\ .
\end{eqnarray*}
Again, the same formula holds over the whole of $\Delta(D_0)$ by a continuity argument.
\end{example}

Note that the analogous calculations can be carried out on a projective space of arbitrary dimension.

\begin{example}[Okounkov function on a blow up of the projective plane]\label{ex:ok f on blow up of p2}
Keeping the notation of the Example~\ref{ex:ok f on p2}, let $f:X_1=\Bl_{P_1}X_0\to\P^2$ be the blow up of the projective plane
   in a point $P_1$ not contained in the flag line $\ell$ with exceptional divisor $E_1$.
   We work now
   with a $\Q$--divisor $D_{\lambda}=f^*(\calo_{\P^2}(1))-\lambda E_1$,
   for some fixed $\lambda\in [0,1]$. A direct computation using \cite[Theorem 6.2]{LazMus09} gives that the Okounkov body has the shape
\begin{center}
\begin{pgfpicture}{0cm}{0cm}{6cm}{6cm}
   \pgfline{\pgfxy(0,1)}{\pgfxy(6,1)}
   \pgfline{\pgfxy(1,0)}{\pgfxy(1,6)}
   \pgfline{\pgfxy(1,5)}{\pgfxy(3,3)}
   \pgfline{\pgfxy(3,1)}{\pgfxy(3,3)}
   \pgfputat{\pgfxy(3,0.9)}{\pgfbox[center,top]{$1-\lambda$}}
   \pgfputat{\pgfxy(0.9,0.9)}{\pgfbox[right,top]{$0$}}
   \pgfputat{\pgfxy(0.9,5)}{\pgfbox[right,center]{$1$}}
   \pgfputat{\pgfxy(2.5,2)}{\pgfbox[right,center]{$\Delta(D_{\lambda})$}}
\end{pgfpicture}
\end{center}
   a). For the valuation $\nu=\ord_{P_0}$, we get exactly as above
   $$\phi^0(a,b)=a+b.$$\\
   b). For the valuation $\nu=\ord_{P_2}$, where $P_2$ is a point in $X_1$
   not on the exceptional divisor $E_1$ (hence $P_2$ can be considered also
   as a point on $\P^2$) and not on the line through $P_0$ and $P_1$. We have now
   for $(a,b)\in\Delta(D_{\lambda})$
   $$\phi^1(a,b)=\left\{\begin{array}{ccc}
      1-a & \mbox{ for } & a+b\leq 1-\lambda\\
      2-2a-b-\lambda & \mbox{ for } & 1-\lambda\leq a+b\leq 1
      \end{array}\right.$$
  This can be seen as follows. $\phi^1(a,b)$ is the maximal
   order of vanishing at $P_2$ among all $\Q$--sections vanishing
   \begin{itemize}
      \item[a)] along $\ell$ to order $a$;
      \item[b)] in $P_1$ to order $\lambda$;
      \item[c)] in $P_0$ to order $b$ after dividing by the
      equation of $\ell$ in power $a$ and after restricting to $\ell$.
   \end{itemize}
   Condition a) ''costs'' $aH$, so we are left with
   $(1-a)H-\lambda E_1$ to take care of conditions b) and c).
   If $b\leq 1-a-\lambda$, then we take a line through the
   points $P_2$ and $P_1$ with multiplicity $\lambda$
   and the line through $P_2$ and $P_0$ with multiplicity $1-a-\lambda$.
   Their union has multiplicity $\lambda+(1-a-\lambda)=1-a$ at $P_2$
   and satisfies b) and c). Moreover, there is no $\Q$-divisor
   equivalent to $(1-a)H-\lambda E_1$ with
   higher multiplicity at $P_2$, which follows easily from
   B\'ezout's theorem intersecting with both lines.

   The argument in the remaining case $b>1-a-\lambda$ is similar.
   We want to split the divisor so that it produces a high
   vanishing order towards condition c) first and then,
   after arriving to the threshold
   \begin{equation}\label{eq:threshold}
      b'=1-a'-\lambda',
   \end{equation}
   we take again the union of two lines as above. Thus,
   we start with the conic through $P_1$ and $P_2$
   tangent to $\ell$ at $P_0$. We take this conic with
   multiplicity $\alpha$ subject to condition that
   $$b-2\alpha=1-a-2\alpha-(\lambda-\alpha),$$
   which means that the divisor $(1-a-2\alpha)H-(\lambda-\alpha)E_1$
   satisfies \eqnref{eq:threshold} with $b'=b-2\alpha$,
   $a'=a+2\alpha$ and $\lambda'=\lambda-\alpha$.
   The constructed $\Q$-divisor, consisting of the conic and two lines
   has then multiplicity
   $$a+b+\lambda-1+(1-a-2(a+b+\lambda-1))=2-2a-b-\lambda.$$
   B\'ezout's theorem shows then that there is no divisor of higher
   multiplicity.

\end{example}

\subsection{Invariants of Okounkov functions}

We treat various properties of Okounkov functions.

   Given a linearly bounded filtration $\fbul$ on a graded linear series $\vbul$,
   we can restrict it to $\fmbul$ on the Veronese subseries
   $$\vmbul:=\bigoplus_{k=1}^\infty V_{mk}$$
   for $m\geq 1$. The index $m$ in $\fmbul$ helps us to keep track to which
   graded linear series the valuation is applied in the given moment.
   The corresponding Okounkov bodies scale well
   by \cite[Proposition 4.1]{LazMus09}
   $$\Delta_{\ybul}(\vmbul)=m\Delta_{\ybul}(\vbul),$$
   so that it makes sense to compare the corresponding Okounkov functions.
   It turns out that they scale as well.
\begin{theorem}[Veronese homogenity of Okounkov functions]
   Let $X$ be an irreducible projective variety and let $\fbul$ be a linearly
   bounded valuation on the graded linear series $\vbul$. Then
   \begin{equation}\label{eq:4}
      \phifmbul(mx)=m\cdot\phifbul(x)
   \end{equation}
   for all $x\in\Delta_{\ybul}(\vbul)$.
\end{theorem}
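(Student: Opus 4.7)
\proof
The plan is to reduce \eqref{eq:4} to the corresponding identity for the auxiliary function $\wphifbul$ from \eqref{eq:2}, and then to push it through the closed concave envelope via the latter's compatibility with homotheties.

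The starting observation is the elementary scaling
\[
\Val_{\ybul}(\vmbul)=m\cdot\Val_{\ybul}(\vbul).
\]
The inclusion $\supseteq$ follows by replacing any $s\in V_j$ with $s^m\in V_{mj}=(V_{m\bullet})_j$, whose normalised valuation vector in $\vmbul$ is $\tfrac{1}{j}\nuy(s^m)=m\cdot\tfrac{1}{j}\nuy(s)$, while the reverse inclusion is immediate from $(V_{m\bullet})_k=V_{mk}$.

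Consequently, for $v\in\Val_{\ybul}(\vbul)$ I would apply the definition \eqref{eq:2} of $\wphifbul$ to $\vmbul$ at the point $mv$. Since $\fmbul$ on $(V_{m\bullet})_k=V_{mk}$ coincides with $\calf_{\bullet}V_{mk}$, the defining sequence for $\wtilde{\phifmbul}(mv)$ is exactly $m$ times the subsequence $j=mk$ of the super-additive sequence defining $\wphifbul(v)$. Fekete's lemma \cite{Fek23} guarantees convergence of the latter to its supremum, so any subsequential limit coincides with the full limit, giving $\wtilde{\phifmbul}(mv)=m\cdot\wphifbul(v)$. For $y\in\R^n\setminus\Val_{\ybul}(\vmbul)$ -- equivalently $y/m\notin\Val_{\ybul}(\vbul)$ by the first step -- both sides vanish by convention, so the identity $\wtilde{\phifmbul}(y)=m\cdot\wphifbul(y/m)$ holds throughout $\R^n$.

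The last step will be a purely formal lemma: the closed concave envelope commutes with homothety. Namely, if $f\colon\Delta\to\R$ is bounded on a compact convex $\Delta$ and $g(y):=m\cdot f(y/m)$ on $m\Delta$, then $g^c(y)=m\cdot f^c(y/m)$, because the map $h\mapsto\tilde h(x):=h(mx)/m$ is an order-preserving bijection between upper semi-continuous concave majorants of $g$ on $m\Delta$ and those of $f$ on $\Delta$. Applied with $f=\wphifbul$ on $\dybul(\vbul)$ and $g=\wtilde{\phifmbul}$ on $\dybul(\vmbul)=m\dybul(\vbul)$ (cf.\ \cite[Proposition 4.1]{LazMus09}), Definition~\ref{def:of1} then yields the desired equality $\phifmbul(mx)=m\cdot\phifbul(x)$. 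The only substantive content is the reindexing step; no serious obstacle is expected, and the linear boundedness of $\fbul$ supplies the boundedness required to form the envelopes.
\endproof
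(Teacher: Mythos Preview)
Your argument is correct and follows essentially the same route as the paper: establish $\wtilde{\phifmbul}(mv)=m\,\wphifbul(v)$ on normalized valuation vectors by reindexing the defining superadditive sequence, and then transport this through the closed concave envelope. Your treatment is in fact a bit more explicit than the paper's --- you spell out the Fekete subsequence step and the commutation of the envelope with homothety, whereas the paper just invokes density of valuation vectors and uniqueness of the closed concave envelope --- but the underlying strategy is identical.
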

\proof
   To begin with let $x\in\Delta_{\ybul}(\vbul)$ be a normalized
   valuation vector. Then
   $$\phifmbul(mx)=\sup\left\{t\in\R:\;
     \exists s\in \calf_{mt}V_k(mL):\; \nu_{\ybul}(s)=m x\right\}$$
   $$=\sup\left\{t\in\R:\;
     \exists s\in \calf_{mt}V_{mk}(L):\; \nu_{\ybul}(s)=m x\right\}$$
   $$=m\sup\left\{t\in\R:\;
     \exists s\in \calf_tV_{k}(L):\; \nu_{\ybul}(s)=x \right\}
     =m\,\phifbul(x).$$
   The equality of both functions follows then from the density
   statement \ref{lem:density of valuation} (1.) and the fact
   that the closed concave envelope is unique.
\endproof
   Using the above result we show that working with an appropriate flag,
   the Okounkov function can be recovered out of its values on the boundary
   of $\Delta_{\ybul}(\vbul)$. More precisely, we establish the following fact.
\begin{theorem}[Reading off Okounkov functions from the  boundary]\label{thm:function from boundary}
   Assume that $\vbul$ is a graded linear series associated to the line bundle
   $L$ such that there is an irreducible divisor $Y_1\in|L|$. We take a flag $\ybul$
   whose divisorial part is $Y_1$. Let $\fbul$ be a filtration on $\vbul$
   defined by a geometric valuation $\nu$. Then for $x=(x_1,\dots,x_n)\in\Delta_{\ybul}(\vbul)$
   we have
\begin{equation}\label{eq:5}
   \phifbul(x_1,\dots,x_n)=(1-x_1)\phifbul\left(0,\frac{x_2}{1-x_1},\dots,\frac{x_n}{1-x_n}\right)
      + x_1\cdot\nu(Y_1).
\end{equation}
\end{theorem}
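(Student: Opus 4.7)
My approach is to establish the identity first at normalized valuation vectors by factoring off powers of the section cutting out $Y_1$, and then to extend to all of $\Delta_{\ybul}(\vbul)$ via the Boucksom--Chen description $\psifbul$. Let $s_{Y_1}\in \HH{0}{X}{L}$ be a section with $\divv(s_{Y_1}) = Y_1$, so $\nu(s_{Y_1}) = \nu(Y_1)$. For any $s \in V_k$ with $a := \ord_{Y_1}(s)$, write $s = s_{Y_1}^a \tilde s$; assuming $\vbul$ is stable under multiplication by $s_{Y_1}$ (which is automatic for the complete linear series and implicit in the theorem's setup), we have $\tilde s \in V_{k-a}$ with $\ord_{Y_1}(\tilde s) = 0$. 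Since $Y_1$ is the divisorial component of $\ybul$, the factorization preserves the higher coordinates of the valuation vector: $\nuy(\tilde s) = (0,\nuy(s)_2,\ldots,\nuy(s)_n)$, and additivity of the geometric valuation gives $\nu(s) = a\nu(Y_1) + \nu(\tilde s)$.

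Fix a normalized valuation vector $x = (x_1,\ldots,x_n) \in \Delta_{\ybul}(\vbul)$ with $x_1 < 1$, realized as $\tfrac{1}{k}\nuy(s)$ with $s \in V_k$, and set $k' := k(1-x_1)$ along with $y(x) := (0,\tfrac{x_2}{1-x_1},\ldots,\tfrac{x_n}{1-x_1})$. The factorization above identifies $\tilde s$ with a section of $V_{k'}$ realizing $y(x)$ as a normalized valuation vector. Taking the supremum of $\nu(s)$ over all such $s$, dividing by $k$, and passing to the limit as in the discussion following \eqnref{eq:2} then yields
\[
\wtilde{\phifbul}(x) = x_1\nu(Y_1) + (1-x_1)\wtilde{\phifbul}(y(x)).
\]
Since $\wtilde{\phifbul}$ coincides with $\phifbul$ on normalized valuation vectors (by the lemma preceding Definition~\ref{def:of2}), the asserted formula holds on this dense subset of $\Delta_{\ybul}(\vbul)$.

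To extend the identity to the whole Okounkov body, I pass to the equivalent description $\phifbul = \psifbul$. Applied to the filtered subseries $\vbul^{(t)}$ defined in~\eqnref{eq:veronese filtr}, the factorization shows that $s = s_{Y_1}^a\tilde s \in V_k^{(t)}$ if and only if $\tilde s\in V_{k-a}^{(s_0)}$ for $s_0 = (t - (a/k)\nu(Y_1))/(1 - a/k)$. Taking closed convex hulls of the corresponding normalized valuation vectors one obtains the equivalence
\[
x \in \Delta_{\ybul}(\vbul^{(t)}) \iff y(x) \in \Delta_{\ybul}(\vbul^{(s_0)}),
\]
valid on the entire Okounkov body. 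Taking the supremum over $t$, reparametrized via $t = x_1\nu(Y_1) + (1-x_1)s_0$, yields $\psifbul(x) = x_1\nu(Y_1) + (1-x_1)\psifbul(y(x))$, which is the claimed formula.

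The central difficulty is this last step: upgrading the section-level bijection to an identification of closed convex hulls of normalized valuation vectors. Since the scaling factor $1-x_1$ varies over the Okounkov body, the relating map is a perspective change of coordinates rather than a single affine transformation. The cone-like structure of $\Delta_{\ybul}(\vbul)$ inherited from the hypothesis $Y_1 \in |L|$ — every point is a convex combination of $(1,0,\ldots,0)$ with a point of the face $\{v_1 = 0\}$ — is precisely what ensures the perspective map is compatible with closing and taking convex hulls.
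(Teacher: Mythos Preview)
Your core argument on normalized valuation vectors---factoring $s = s_{Y_1}^{a}\tilde s$ and using additivity of the geometric valuation $\nu$---is exactly the paper's approach. The paper packages the passage to a level $m$ at which $mx$ is integral via the Veronese homogeneity theorem proved immediately before (so that $\phifbul(x)=\tfrac1m\phifmbul(mx)$), whereas you carry out the same reduction inline by realizing $x$ at a single level $k$; the content is identical.

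Where you diverge is in the extension from the dense set of valuation vectors to all of $\Delta_{\ybul}(\vbul)$. The paper simply asserts that it suffices to check the identity on valuation vectors, relying on density and the definition of $\phifbul$ as a closed concave envelope. You instead route through $\psifbul$ and the Boucksom--Chen bodies $\Delta_{\ybul}(\vbul^{(t)})$, claiming the equivalence $x\in\Delta_{\ybul}(\vbul^{(t)})\Leftrightarrow y(x)\in\Delta_{\ybul}(\vbul^{(s_0)})$. This is more laborious, and---as you yourself flag---the passage from a section-level bijection to a statement about \emph{closed convex hulls} under the perspective map $x\mapsto y(x)$ is the genuine issue; the ``cone-like structure'' remark is a heuristic, not a proof, since $s_0$ varies with $x_1$ and so you are not comparing two fixed convex bodies under a single map. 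The cleaner justification (which makes the paper's ``it suffices'' honest) is to observe that the right-hand side is a linear term plus the perspective transform $(1-x_1)\,\phifbul\bigl(0,\tfrac{x_2}{1-x_1},\dots,\tfrac{x_n}{1-x_1}\bigr)$ of the concave upper-semicontinuous function $\phifbul(0,\cdot)$, hence is itself concave and upper-semicontinuous; two concave u.s.c.\ functions on a convex body are continuous along segments and are therefore determined by their values on any dense subset, so agreement on valuation vectors already forces agreement everywhere.
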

\proof
   It suffices to establish \eqnref{eq:5} in case $x=v$ is a normalized valuation vector.
   For $m$ large enough and divisible all coordinates $mx_1$,...,$mx_n$ are integers and we have
   by \eqnref{eq:4}
\begin{equation}\label{eq:6}
   \phifbul(x)=\frac1m\phifmbul(mx)=
      \frac1m\lim_{k\to\infty}\frac1k\sup\left\{t:\;\exists s\in V_{mk}:\;
      \nu(s)\geq t \mbox{ and } \nu_{\ybul}(s)=mkx\right\}.
\end{equation}
   A section $s$ with $\nu_{\ybul}(s)=mkx$ can be written as
   $s=s'\cdot s_1^{mkx_1}$, where $s_1\in H^0(L)$
   is the section defining $Y_1$ and we have
   $$\nu(s)=\nu(s')+mkx_1\cdot\nu(s_1),$$
   since $\nu$ is a geometric valuation. For the Okounkov valuation
   $\nu_{\ybul}$ we have
   $$\nu_{\ybul}(s_1^{mkx_1})=(mkx_1,0,\dots,0)\;\mbox{ and }\;
     \nu_{\ybul}(s')=(0,mkx_2,\dots,mkx_n)=:x'.$$
   Note that $s'$ is a section in $V_{(1-x_1)mk}$. Thus, continuing \eqnref{eq:6}
   we establish
\begin{eqnarray}\label{eq:7}
   \phifbul(x) & = & \frac1m\lim_{k\to\infty}\frac1k\left[
   mkx_1\nu(s_1)+\sup\left\{t:\;\exists s'\in V_{(1-x_1)mk}:\;
      \nu(s')\geq t \text{ and } \right. \right. \nonumber \\
      && \left. \left.  \nu_{\ybul}(s')=mkx'\right\}\right]\ .
\end{eqnarray}
   With $x'':=\frac{1}{1-x_1}\cdot x'$ we have
   $$mkx'=(1-x_1)mk\cdot x''$$
   and thus continuing \eqnref{eq:7} we have
   \begin{eqnarray*}
   \phifbul(x) & = & x_1\nu(s_1) +(1-x_1)\frac{1}{m(1-x_1)} \times \\
   && \times  \lim_{k\to\infty}\frac1k
   \sup\left\{t:\;\exists s'\in V_{(1-x_1)mk}:\;
      \nu(s')\geq t \right. \\
      && \left. \text{ and }\nu_{\ybul}(s')=(1-x_1)mkx''\right\} \\
      & = & x_1\nu(s_1)+(1-x_1)\frac1{m(1-x_1)}\varphi_{\calf_{m(1-x_1)\bullet}}(m(1-x_1)x'') \\
      & = & x_1\nu(s_1)+(1-x_1)\phifbul(x'')\ .
   \end{eqnarray*}
\endproof

\begin{remark}
Repeated applications of Theorem~\ref{thm:function from boundary} reduce the computation of $\phi_{\fbul}$ to the situation where we consider only those global sections of $L$ that vanish
 at the point $Y_n$. If the restriction map
\[
 \HH{0}{X}{\OO_X(mL)} \lra \HH{0}{Y_{n-2}}{\OO_{Y_{n-1}}(mL)}
\]
is surjective for $m\gg 0$, then this amounts to a calculation on the curve $Y_{n-1}$.  Consequently, the computation of $\phi_{\fbul}$ for very ample divisors can be essentially reduced to
the curve case.
\end{remark}

At last we check that the functions $\phi_\nu$ are continuous when considered as functions on the interior of the global Okounkov body of $X$.

\begin{proposition}\label{prop: global cont}
Let $X$ be an irreducible projective variety, $\nu$ a geometric valuation of $\C(X)$, $\phi_\nu:\Delta_{\ybul}(X)\to \R_{\geq 0}$ the associated Okounkov function. Then $\phi_\nu$ is
continuous on the open subset
\[
 U \deq \bigcup_{\alpha\in\Bbig(X)} \Delta_{\ybul}^{\circ}(\alpha) \dsubseteq N^1(X)_\R \ .
\]

\end{proposition}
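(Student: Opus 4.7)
The plan is to view $\phi_\nu$ as a function on the global Okounkov body $\Delta_{\ybul}(X)\subseteq \R^n\times N^1(X)_\R$ of Lazarsfeld--Musta\c t\u a, defined slice-by-slice: at a point $(v,\alpha)$ with $\alpha\in\Bbig(X)$ one puts $\phi_\nu(v,\alpha)$ equal to the value at $v$ of the Okounkov function of the complete section ring of $\alpha$. The strategy is then to prove that $\phi_\nu$ is concave on all of $\Delta_{\ybul}(X)$ and to observe that $U$ sits inside the interior of $\Delta_{\ybul}(X)$; continuity on $U$ then follows from the classical fact that a finite nonnegative concave function is continuous at every interior point of its convex domain.

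For the global concavity, I would first establish a superadditivity statement across slices. If $(v_i,\alpha_i)$ are normalized valuation pairs, represented by sections $s_i\in H^0(X,k\alpha_i)$ with $\nu_{\ybul}(s_i)=kv_i$, then the product $s_1s_2\in H^0(X,k(\alpha_1+\alpha_2))$ satisfies $\nu_{\ybul}(s_1s_2)=k(v_1+v_2)$ and $\nu(s_1s_2)=\nu(s_1)+\nu(s_2)$ because $\nu$ is a valuation. Passing to the supremum defining $\phi_\nu$ and arguing as in the lemma after Definition~\ref{def:of1} yields
\[
\phi_\nu\bigl(v_1+v_2,\,\alpha_1+\alpha_2\bigr)\;\ge\;\phi_\nu(v_1,\alpha_1)+\phi_\nu(v_2,\alpha_2).
\]
Combined with the Veronese homogeneity $\phi_\nu(tv,t\alpha)=t\,\phi_\nu(v,\alpha)$ for positive rational $t$ (which follows from the Veronese homogeneity theorem stated just before Theorem~\ref{thm:function from boundary}, applied to the complete section ring of $\alpha$), this produces midpoint concavity at normalized valuation pairs. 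The density of such pairs in $\Delta_{\ybul}(X)$ (from Lemma~\ref{lem:density of valuation} together with density of rational big classes in $\Bbig(X)$), together with a closed-concave-envelope argument performed in the ambient $\R^n\times N^1(X)_\R$, then promotes midpoint concavity to concavity of $\phi_\nu$ on the whole global body.

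It remains to verify that every $(v_0,\alpha_0)$ with $\alpha_0\in\Bbig(X)$ and $v_0\in\Delta_{\ybul}^{\circ}(\alpha_0)$ is an interior point of $\Delta_{\ybul}(X)$ as a subset of $\R^n\times N^1(X)_\R$. Since $\Bbig(X)$ is open in $N^1(X)_\R$ and, by convexity of the global body, nearby slices $\Delta_{\ybul}(\beta)$ still contain a small neighborhood of $v_0$, convex combinations across such slices fill out an open neighborhood of $(v_0,\alpha_0)$ inside $\Delta_{\ybul}(X)$. Standard convex analysis now yields continuity of the finite nonnegative concave function $\phi_\nu$ at each such point.

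The main obstacle is handling upper semi-continuity of $\phi_\nu$ across slices, which is needed to promote slice-wise midpoint concavity (the immediate output of the superadditivity step) to genuine global concavity. The cleanest way around this is to redefine $\phi_\nu$ on $\Delta_{\ybul}(X)$ as the closed concave envelope (in the ambient Euclidean space) of the preliminary function $\wtilde\phi_\nu$ of Definition~\ref{def:of1}; this produces concavity and upper semi-continuity for free, but one must then separately check that the result agrees slice-wise with the original Okounkov function, a coincidence that rests again on the Veronese homogeneity and the density of rational big classes in $\Bbig(X)$.
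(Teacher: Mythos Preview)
Your proposal is correct and follows essentially the same route as the paper: define $\wtilde{\phi}_\nu$ on the multigraded valuation vectors of the global Okounkov body, take its closed concave envelope in $\R^n\times N^1(X)_\R$, check that this agrees slice-wise with the individual Okounkov functions over rational big classes, and then invoke continuity of concave functions on the interior. In fact your write-up is more careful than the paper's terse argument, since you explicitly flag the need to verify that $U$ lies in the interior of $\Delta_{\ybul}(X)$ and that the global envelope restricts correctly to each slice.
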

\begin{proof}
Let $D_1,\dots,D_\rho$ be integral divisors on $X$ whose numerical equivalence classes form a $\Z$-basis of $N^1(X)_\R$; assume in addition that every effective divisor on $X$ is a non-negative
integral linear combination of the $D_i$'s up to numerical equivalence. This can be arranged by \cite[p.30.]{LazMus09}. For an element $\overline{m}\in\N^\rho$, we set as usual
\[
 \overline{m}\cdot \overline{D} \deq \sum_{i=1}^{\rho}m_iD_i\ .
\]
The multigraded semigroup of $X$ (with respect to the choices of the $D_i$'s and an admissible flag) is
\[
 \Gamma_{\ybul}(X) \equ \st{(\overline{m},\nu_{\ybul}(s))\,|\, 0\neq s\in\HH{0}{X}{\OO_X(\overline{m}\cdot\overline{D}}} \dsubseteq \N^{n+\rho}\ .
\]
The global Okounkov body of $X$ is then the closure of the convex hull  of the set of normalized multigraded valuation vectors
\[
\bigcup_{\overline{q}\in\Q_{\geq 0}^\rho}\bigcup_{k\in\N\, ,\, k\overline{q}\in\N^\rho} \st{\left(\overline{q},\frac{1}{k}\nu_{\ybul}(s)\right)\,:\; 0\neq s\in\HH{0}{X}{\OO_X(k\cdot\overline{q}\cdot\overline{D})}} \dsubseteq \R^{n+\rho}\ .
\]
If $(\overline{q},\alpha)$ is such a vector, then we define
\[
\wtilde{\phi}_\nu(\overline{q},\alpha) \deq
\lim_{k\to\infty}\frac1k\sup\left\{t\in\R:\; \exists s\in\calf_t\HH{0}{X}{\OO_X({k\cdot\overline{q}}\cdot\overline{D})}:\; \nuy(s)=k\cdot \alpha\right\}.
\]
For all other points of $\R^{n+\rho}$ we set $\wtilde{\phi}_\nu$ to be equal to zero. The concave transform of $\wtilde{\phi}_\nu$ is then a continuous function on $\Delta_{\ybul}(X)$,
which agrees over all  classes $\xi\in\N^1(X)_\Q$ with $\phi_\nu$ defined on the Okounkov body $\Delta_{\ybul}(\xi)$. This proves the claim.
\end{proof}

\section{Integrals of Okounkov functions}

In this section we point out a new way of constructing invariants of numerical equivalence classes of Cartier divisors via integrating functions on Okounkov
bodies. Let $X$ be an irreducible projective variety of dimension $n$ as so far, $Y_\bullet$ an admissible flag.

\begin{definition}
Let $\vbul$ be a big graded linear series, $\nu$ a geometric valuation of $\C(X)$.  We set
\[
 I(\vbul;\nu) \deq \int_{\Delta_{\ybul}(\vbul)} \phi_\nu\ .
\]
As usual, we write $I(D;\nu)$, whenever $\vbul$ is the complete graded linear series associated to a Cartier divisor $D$ on $X$.
\end{definition}

\begin{remark}
 The function $\phi_\nu$ is a bounded upper-semicontinuous concave function on the compact subset $\Delta_{\ybul}(\vbul)$, therefore it is Lebesgue
integrable. Being non-negative as well, its integral is non-negative, and so $0\leq I(D;\nu) <\infty$.
\end{remark}

It follows from results of \cite{BouChe11} that  $I(D;\nu)$ is in fact  independent of the flag $Y_\bullet$ as the notation suggests.

\begin{proposition}\label{prop:integral}
 With notation as above,
\[
 I(D;\nu) \deq \vol{\R^n}{\hat{\Delta}(V_\bullet, F_\nu)} \equ \int_{t=0}^{+\infty} \vol{\R^n}{\Delta(V^{(t)})} dt \equ \lim_{k\to +\infty} \frac{\mass
(V_k,F_\nu)}{k^{n+1}}\ .
\]
\end{proposition}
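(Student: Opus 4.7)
The plan is to establish the three equalities separately, using Fubini-type arguments for the first two and the asymptotic Okounkov-body formula for the third.

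First, $\hat{\Delta}(V_\bullet,F_\nu)$ should be understood as the hypograph
\[
\hat{\Delta}(V_\bullet,F_\nu) \deq \bigl\{(x,t) \in \R^n\times\R_{\geq 0} \,:\, x\in\Delta_{\ybul}(V_\bullet),\ 0\leq t\leq \phi_\nu(x)\bigr\}.
\]
Since $\phi_\nu$ is bounded, non-negative, upper semi-continuous and integrable on the compact set $\Delta_{\ybul}(V_\bullet)$, Fubini yields
\[
\vol\bigl(\hat{\Delta}(V_\bullet,F_\nu)\bigr)\equ \int_{\Delta_{\ybul}(V_\bullet)} \phi_\nu \equ I(D;\nu),
\]
which is the first equality.

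For the second equality, I would invoke the layer-cake formula together with Definition~\ref{def:of2}. Since $\phi_\nu = \psi_{\fbul}$ by the equivalence lemma, we have $\phi_\nu(x)\geq t$ precisely when $x\in \Delta_{\ybul}(V_\bullet^{(s)})$ for all $s<t$; as the family $\{\Delta_{\ybul}(V_\bullet^{(s)})\}$ is non-increasing and the intersection $\bigcap_{s<t}\Delta_{\ybul}(V_\bullet^{(s)}) = \Delta_{\ybul}(V_\bullet^{(t)})$ up to a set of measure zero, the super-level set $\{\phi_\nu\geq t\}$ agrees with $\Delta_{\ybul}(V_\bullet^{(t)})$ almost everywhere. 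Therefore
\[
\int_{\Delta_{\ybul}(V_\bullet)} \phi_\nu\,dx \equ \int_0^\infty \vol\bigl(\{\phi_\nu\geq t\}\bigr)\,dt \equ \int_0^\infty \vol\bigl(\Delta_{\ybul}(V_\bullet^{(t)})\bigr)\,dt.
\]
Since the filtration is linearly right bounded (Proposition~\ref{prop:filt on a graded lin series}), the integrand has compact support in $t\in[0,C]$ for some $C>0$, so the integral is finite.

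For the third equality, I would rewrite the mass using the definition of jumping numbers:
\[
\mass(V_k,F_\nu) \equ \sum_{j=1}^{\dim V_k} e_j(V_k,F_\nu) \equ \int_0^\infty \dim \calf_t V_k\,dt.
\]
After the substitution $t=ks$ (so $dt = k\,ds$ and $\calf_{ks}V_k = V_k^{(s)}$), this becomes
\[
\frac{\mass(V_k,F_\nu)}{k^{n+1}} \equ \int_0^\infty \frac{\dim V_k^{(s)}}{k^n}\,ds.
\]
By the Okounkov body volume formula applied to the graded linear series $V_\bullet^{(s)}$ (Lazarsfeld--Musta\c t\u a, extended to filtered settings by Boucksom--Chen), we have pointwise convergence
\[
\lim_{k\to\infty}\frac{\dim V_k^{(s)}}{k^n} \equ \vol\bigl(\Delta_{\ybul}(V_\bullet^{(s)})\bigr).
\]
The main obstacle will be justifying the interchange of limit and integral. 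This is handled by linear right-boundedness: $\dim V_k^{(s)} = 0$ for $s > e_{\max}(V_\bullet,F_\nu)$, so the integrands are supported on the fixed compact interval $[0,e_{\max}]$; uniform boundedness of $\dim V_k^{(s)}/k^n \leq \dim V_k/k^n$ (which converges to $\vol(\Delta_{\ybul}(V_\bullet))<\infty$) furnishes a dominating function and dominated convergence gives
\[
\lim_{k\to\infty}\frac{\mass(V_k,F_\nu)}{k^{n+1}} \equ \int_0^\infty \vol\bigl(\Delta_{\ybul}(V_\bullet^{(s)})\bigr)\,ds,
\]
which closes the chain of equalities. Note that the independence of $I(D;\nu)$ from the chosen flag $Y_\bullet$ follows immediately, since the rightmost expression $\lim_k \mass(V_k,F_\nu)/k^{n+1}$ is manifestly flag-independent.
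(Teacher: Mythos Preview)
Your argument is correct and essentially reconstructs the proof of \cite[Corollary~1.11]{BouChe11}, which is precisely what the paper does here: its entire proof is a one-line citation of that result, together with the observation that the right-hand side is manifestly flag-independent. So you have supplied more detail than the paper, but along exactly the line the paper defers to.

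One small point worth tightening: the pointwise convergence $\dim V_k^{(s)}/k^n \to \vol\bigl(\Delta_{\ybul}(V_\bullet^{(s)})\bigr)$ requires that $V_\bullet^{(s)}$ satisfy the hypotheses of the Okounkov volume formula (e.g.\ Condition~(B) or (C) in \cite{LazMus09}), which can fail for $s$ near the threshold $e_{\max}$. Boucksom--Chen handle this by showing the set of exceptional $s$ has measure zero, so dominated convergence still goes through; you allude to this but do not make it explicit.
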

\begin{proof}
 This is the content of \cite[Corollary 1.11]{BouChe11}. Observe that the right-hand side expression is by its definition independent of the flag $Y_\bullet$.
\end{proof}

\begin{example}
   Let $f:X_1\to\P^2$ be the blow up of $\P^2$ in a point $P_1$ with the exceptional
   divisor $E_1$, as in Example \ref{ex:ok f on blow up of p2}.
   A divisor
   $$D=\alpha f^*(\calo_{\P^2}(1))-\beta E_1 \mbox{ is big on } X_1
   \mbox{ iff } \alpha>0 \mbox{ and } \beta<\alpha.$$
   For $\beta<0$ we have
   $$\Delta(\alpha f^*(\calo_{\P^2}(1))-\beta E_1)=\Delta(\alpha f^*(\calo_{\P^2}(1))),$$
   so it is enough to consider $0\leq \beta<\alpha$. Furthermore, we can divide by $\alpha$, see
   Remark \ref{rem:homogenity of I(D nu)}. Then with $\lambda=\frac{\beta}{\alpha}$,
   it follows from Example \ref{ex:ok f on blow up of p2}
   that
   $$I(f^*(\calo_{\P^2}(1))-\lambda E_1,\ord_{P})=\frac13-\frac12\lambda^2+\frac16\lambda^3$$
   for $P$ as in case a). or b). of that example.
\end{example}

\begin{remark}
Witt-Nystr\"om points  out  in \cite[Section 6]{Nys10} that test configurations of a pair $(X,L)$ defined by Donaldson \cite{Don1}
(where $X$ is a projective variety, $L$ an ample Cartier divisor on $X$) give rise to filtrations  of the section ring $R(X,L)$,
 and therefore to functions on $\Delta_{\ybul}(L)$ for some flag $\ybul$.

 Witt-Nystr\"om also observes (see \cite[Corollary 6.6]{Nys10}) that the integral of such a function (properly normalized) recovers the Futaki
 invariant $F_0$ of the test configuration one starts out with.
\end{remark}

The next statement fits in well with the philosophy that asymptotic invariants of line bundles tend to respect  numerical equivalence.

\begin{proposition}[Numerical invariance of Okounkov functions]\label{prop:num equiv}
Let $v$ be a discrete valuation, $D$ a big integral Cartier divisor on $X$. Then the function
\[
 \phi_v : \Delta_{Y_\bullet}(D) \lra \R
\]
depends only on the numerical equivalence class of $D$.
\end{proposition}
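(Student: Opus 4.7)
The plan is to reduce the numerical invariance of $\phi_v$ to the numerical invariance of Okounkov bodies of Lazarsfeld--Musta\c{t}\u a. Using Definition~\ref{def:of2}, we have $\phi_v = \psifbul$, where
\[
 \psifbul(x) \equ \sup\left\{t \in \R : x \in \Delta_{Y_\bullet}\bigl(\vbul^{(t)}\bigr)\right\}
\]
and $V_k^{(t)}=\{s\in H^0(X,kD):v(s)\geq tk\}$. So it is enough to prove that the Okounkov body $\Delta_{Y_\bullet}(\vbul^{(t)})$ of the filtered subseries depends only on the numerical class of $D$, for every rational $t\geq 0$.

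The idea is to realize $\vbul^{(t)}$ as a complete linear series on a birational model. Let $\pi:Y\to X$ be a smooth birational model on which the geometric valuation $v$ is given by $\ord_F$ for a prime divisor $F\subset Y$. Then $\pi^{*}$ identifies $V_k^{(t)}$ with the complete linear series $H^0(Y,\pi^{*}(kD)-\lceil tk\rceil F)\subseteq H^0(Y,\pi^{*}(kD))=H^0(X,kD)$. After a further modification if necessary, we may assume that the proper transform $\tilde Y_\bullet$ of $Y_\bullet$ is an admissible flag on $Y$ such that $\nu_{Y_\bullet}(s)=\nu_{\tilde Y_\bullet}(\pi^{*}s)$ for every section $s$. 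This yields the identity
\[
 \Delta_{Y_\bullet}\bigl(\vbul^{(t)}(D)\bigr) \equ \Delta_{\tilde Y_\bullet}\bigl(\pi^{*}D-tF\bigr).
\]

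Now if $D\equiv D'$, then $\pi^{*}D-tF\equiv \pi^{*}D'-tF$ on $Y$, so the numerical invariance of Okounkov bodies for complete graded linear series gives
\[
 \Delta_{\tilde Y_\bullet}\bigl(\pi^{*}D-tF\bigr) \equ \Delta_{\tilde Y_\bullet}\bigl(\pi^{*}D'-tF\bigr).
\]
Combining, $\Delta_{Y_\bullet}(\vbul^{(t)}(D))=\Delta_{Y_\bullet}(\vbul^{(t)}(D'))$ for every rational $t\geq 0$, so the two functions $\psifbul^{D}$ and $\psifbul^{D'}$ coincide on $\Delta_{Y_\bullet}(D)=\Delta_{Y_\bullet}(D')$, and $\phi_v$ depends only on the numerical class of $D$.

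The delicate point is ensuring compatibility between the flag $Y_\bullet$ on $X$ and a flag on $Y$ reproducing the same valuation vectors under $\pi^{*}$. When some member of $Y_\bullet$ interacts badly with the center of $v$, one must modify the birational model (or perturb the flag) before the proper transform argument is valid; this is the main obstacle and it should be dispatched by a standard resolution step. A secondary concern is that $\pi^{*}D-tF$ need not remain big for all $t>0$, in which case one can either invoke the definition of Okounkov bodies for pseudo-effective classes (as in \cite{BiaPaz12}) or reduce the computation of $\psifbul$ to those rational $t$ for which $\pi^{*}D-tF$ is still big, which is enough by the sup formulation of $\psifbul$.
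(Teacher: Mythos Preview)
Your strategy is genuinely different from the paper's. The paper stays on $X$ and proves directly, for each $t\in\R$, that $\Delta_{Y_\bullet}(|\bullet D|^{(t)})=\Delta_{Y_\bullet}(|\bullet(D+P)|^{(t)})$ for any numerically trivial $P$, by adapting the Castelnuovo--Mumford regularity trick from \cite[Proposition~4.1]{LazMus09}: one chooses $B$ with $B+lP$ very ample for all $l$, writes $(k+a)(D+P)\sim kD+(aD-B)+(B+(k+a)P)$, and observes that multiplication by the relevant sections preserves the condition $v(\cdot)\ge tk$. No birational modification and no change of flag are needed. Your route, by contrast, tries to realize $\vbul^{(t)}$ as a complete series on a model $Y$ and then quote \cite[Proposition~4.1]{LazMus09} as a black box.

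The reduction idea is sound, but the step you flag as ``the main obstacle'' is a genuine gap, not a routine resolution issue. You need a flag $\tilde Y_\bullet$ on $Y$ with $\nu_{\tilde Y_\bullet}(\pi^*s)=\nu_{Y_\bullet}(s)$ for every section. Taking proper transforms works only when $\pi$ is an isomorphism near the point $Y_n$; if the center of $v$ contains $Y_n$ (e.g.\ $v=\ord_{Y_n}$), the proper transform of the flag is not even defined at the relevant point, and further blow-ups on $Y$ do not repair this without altering the rank-$n$ valuation you are trying to reproduce. ``Perturbing the flag'' is not permitted: the flag is part of the data defining $\phi_v$, and the statement is for an arbitrary admissible flag. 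So as written the argument does not cover all cases. A secondary issue is that \cite[Proposition~4.1]{LazMus09} is stated for big divisors, whereas $\pi^*D-tF$ is non-big for $t\ge\mu(D,Z)$; restricting to $t$ with $\pi^*D-tF$ big is enough to recover $\psifbul$ in the interior, but you should say why it also pins down the boundary values (upper semi-continuity of $\psifbul$ suffices). The paper's direct argument sidesteps both problems.
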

\begin{proof}
Fix an arbitrary numerically trivial divisor $P$ on $X$. First of all, as observed in \cite[Proposition 4.1]{LazMus09}, Okounkov bodies are invariant with
respect to numerical equivalence of divisors,
\[
 \Delta_{\ybul}(D) \equ \Delta_{\ybul}(D+P)\ ,
\]
whence the respective  domains of the functions $\phi_{D,\nu}$ and $\phi_{D+P,\nu}$ agree.

Next, following the train of thought of the proof of \cite[Proposition 4.1 (i)]{LazMus09}, we show that
\[
 \Delta(|\bullet (D+P)|^{(t)}) \equ \Delta(|\bullet D|^{(t)})
\]
holds for every $t\in\R$.

We recall  that $\Delta(|\bullet D|^{(t)})$ is the Newton--Okounkov body attached to the
graded linear series
\[
 A_k \deq \left\{s\in\HH{0}{X}{\OO_X(kD)}\,|\, v(s)\geq tk\right\}\ ,
\]
while $\Delta(|\bullet (D+P)|^{(t)})$ is the convex body associated to the graded linear series
\[
B_k \deq \left\{s'\in\HH{0}{X}{\OO_X(k(D+P))}\,|\, v(s')\geq tk\right\}\ .
\]
It follows from a Castelnuovo--Mumford regularity argument (see \cite[Lemma 2.2.42]{PAG}) that there exists a divisor $B$ on $X$ such that $B+lP$ is very ample
for all $l\in\Z$.
Let $a\gg 0$ be such that $|aD-B|\neq \emptyset$, and let $s\in\HH{0}{X}{\OO_X(aD-B)}$ be the section corresponding to an effective divisor. We write
\[
 (k+a)(D+P) \sim kD+(aD-B)+(B+(k+a)P)\ .
\]
If we represent $B+(k+a)P$ by a section not going through the elements of $Y_\bullet$, then we obtain
\[
  A_{k} \cdot s \subseteq B_{k}\ ,
\]
hence
\[
 \Gamma(A_k) + \nu(s) \subseteq \Gamma(B_{k})\ .
\]
By taking limits we obtain
\[
 \Delta_{\ybul}(|\bullet D|^{(t)}) \equ \Delta_{\ybul}(A_\bullet)\dsubseteq \Delta_{\ybul}(B_\bullet) \equ \Delta_{\ybul}(|\bullet (D+P)|^{(t)})\ .
\]
Replacing $D$ by $D+p$ and  $P$ by $-P$ in the above argument yields the reverse inclusion.
\end{proof}

\begin{proposition}\label{prop:integral cont}
Let $X$ be an irreducible projective variety, $\nu$ a geometric valuation of its function field. Then both
\[
 I(\cdot\, ,\nu):\Bbig(X)\lra \R_{\geq 0}\ \ \text{and}\ \ \frac{1}{\vol{X}{\cdot }}\cdot  I(\cdot\, ,\nu):\Bbig(X)\lra \R_{\geq 0}
\]
 are continuous functions.
\end{proposition}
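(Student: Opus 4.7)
The plan is to first reduce the continuity of $\frac{1}{\vol{X}{\cdot}}\cdot I(\cdot,\nu)$ to that of $I(\cdot,\nu)$: since $\vol{X}{\cdot}$ is continuous and strictly positive on the open cone $\Bbig(X)$, the two statements are equivalent, so I concentrate on the unnormalized integral $I(\xi,\nu) = \int_{\Delta_{\ybul}(\xi)} \phi_\nu$.

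The strategy is a dominated-convergence argument applied to the integrand $\phi_\nu(\xi,x)\mathbf{1}_{\Delta_{\ybul}(\xi)}(x)$ on $\R^n$ as $\xi$ varies. Three ingredients are assembled:
\begin{enumerate}
\item[(a)] By Proposition~\ref{prop: global cont}, the global function $\phi_\nu$ is continuous on the open set $U$ that is the union of the relative interiors of the slices $\Delta_{\ybul}(\xi)$ inside the global Okounkov body $\Delta_{\ybul}(X)\subseteq\R^n\times N^1(X)_\R$; Proposition~\ref{prop:num equiv} makes $I(\cdot,\nu)$ well-defined on $\Bbig(X)\subseteq N^1(X)_\R$.
\item[(b)] The global Okounkov body being a closed convex cone with compact fibers over $\Bbig(X)$, the map $\xi\mapsto\Delta_{\ybul}(\xi)$ is continuous in the Hausdorff metric on $\Bbig(X)$. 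In particular, around any $\xi_0\in\Bbig(X)$ one finds a compact neighborhood $W$ and a compact $K\subseteq\R^n$ containing all $\Delta_{\ybul}(\xi)$ for $\xi\in W$.
\item[(c)] A uniform upper bound $\phi_\nu\leq\mu(\xi,Z)$ on $\Delta_{\ybul}(\xi)$, where $Z$ is the center of $\nu$, follows from Remark~\ref{rem:max jumping number}; since $\mu(\cdot,Z)=\sup\{t\geq 0:\pi^*\xi-tE\text{ is big}\}$ is continuous on the big cone, it is bounded on $W$.
\end{enumerate}

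Given $\xi_m\to\xi_0$ in $\Bbig(X)$, the sequence $\phi_\nu(\xi_m,x)\mathbf{1}_{\Delta_{\ybul}(\xi_m)}(x)$ is supported in $K$ and uniformly bounded by (b) and (c). Combining (a) and (b), it converges pointwise to $\phi_\nu(\xi_0,x)\mathbf{1}_{\Delta_{\ybul}(\xi_0)}(x)$ on the interior of $\Delta_{\ybul}(\xi_0)$ as well as outside its closure, hence almost everywhere since the boundary of a convex body is Lebesgue-null. Dominated convergence then yields $I(\xi_m,\nu)\to I(\xi_0,\nu)$.

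The main obstacle I anticipate is ingredient (b): upper semi-continuity of the slices is essentially formal from closedness of the global Okounkov body, but lower semi-continuity on $\Bbig(X)$ needs that each $\xi_0\in\Bbig(X)$ lies in the interior of the projection to $N^1(X)_\R$, which guarantees that $\Delta_{\ybul}(\xi_0)$ has non-empty relative interior, and it then follows from standard continuity results for projections of closed convex cones onto the interior of their image.
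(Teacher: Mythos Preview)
Your proposal is correct and follows essentially the same approach as the paper: the paper's proof is a one-sentence appeal to Lebesgue's dominated convergence theorem together with ``convexity properties of Okounkov functions,'' and then reduces the normalized integral to the unnormalized one via continuity and positivity of the volume on $\Bbig(X)$. Your ingredients (a)--(c) are precisely a careful unpacking of what that one sentence requires, and your concern about (b) is exactly the sort of detail the paper suppresses.
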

\begin{proof}
The first claim is a consequence of Lebesgue's dominated convergence theorem and the convexity properties of Okounkov functions.  For the second claim, note
that the volume function is continuous and non-zero on $\Bbig(X)$.
\end{proof}

\begin{remark}\label{rem:homogenity of I(D nu)}
Change of variables in the integral and  homogeneity of Okounkov functions yield
\[
 I(mD;\nu) \equ m^{n+1}\cdot I(D;\nu)\ \ \ \text{and}\ \ \ \frac{1}{\vol{X}{mD}}\cdot  I(mD ,\nu) \equ m\cdot \frac{1}{\vol{X}{D}}\cdot  I(D,\nu)\ .
\]
\end{remark}

\section{Appendix: a general 'Fekete lemma' (by S\'ebastien Boucksom)}

\subsection{Facts on semigroups}
Let $V$ be a finite dimensional $\R$-vector space, and $S\subset V$ be a subsemigroup, \ie a non-empty subset stable under taking sums. We denote by:
\begin{itemize}
\item $\Z S=\{s-s'\mid s,s'\in S\}$ the subgroup spanned by $S$,
\item $\R S\subset V$ the $\R$-vector space spanned by $S$,
\item $C(S)\subset\R S$ the convex cone spanned by $S$,
\item $\overline{C}(S)$ its closure, and $\mathring{C}(S)$ its relative interior, \ie its interior in $\R S$.
\end{itemize}
We say that $S$ is a \emph{discrete semigroup} if $\Z S$ is discrete. The \emph{regularization} of $S$ is then defined as the semigroup
$$
S^{\reg}:=\Z S\cap\overline{C}(S).
$$
We rely on the following result, which may be attributed to Khovanskii and appears in \cite{KavKh08} (see also \cite{Bou}).

\begin{proposition}\label{prop:semi} Let $S\subset V$ be a discrete semigroup.
\begin{itemize}
\item[(i)] For every convex cone $\sigma\subset\mathring{C}(S)$ with compact basis, there exists a finitely generated subsemigroup $T\subset S$ such that $S^{\reg}\cap\sigma=T^{\reg}\cap\sigma$.
\item[(ii)] If $T\subset V$ is a discrete semigroup of finite type, then there exists a finite set $F\subset T^{\reg}$ such that $T^{\reg}=T+F$. As a result, $T^{\reg}\setminus T$ meets each cone $\sigma\subset\mathring{C}(T)$ with compact basis in a finite set.
\end{itemize}
\end{proposition}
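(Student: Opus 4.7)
The proof naturally splits along the two assertions. Both rely on the facts that a discrete subgroup of $V$ is finitely generated free abelian, and that rational polyhedral cones interact well with lattices.

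\textbf{Part (i).} The plan is to choose finitely many elements of $S$ that simultaneously (a) generate $\Z S$ as an abelian group, and (b) have $\R_{\geq 0}$-cone containing $\sigma$. Claim (a) is possible because $\Z S$ is finitely generated as a group, spanned by $S$, so a finite subset suffices. For (b), each point of the compact basis of $\sigma$ lies in $\mathring{C}(S)$, and every such point has a neighborhood contained in the $\R_{\geq 0}$-cone of some finite subset of $S$ (express the point as a positive combination of elements of $S$ lying in general position and vary slightly); by compactness of the basis, finitely many elements of $S$ yield a cone containing a neighborhood of the basis, hence containing $\sigma$. Letting $T$ be the subsemigroup of $S$ generated by all these elements, we have $\Z T = \Z S$ and $\sigma \subset C(T) \subset \overline{C}(T)$, whence
\[
S^{\reg} \cap \sigma = \Z S \cap \overline{C}(S) \cap \sigma = \Z S \cap \sigma = \Z T \cap \overline{C}(T) \cap \sigma = T^{\reg} \cap \sigma,
\]
which is the required equality.

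\textbf{Part (ii), first claim.} I plan to invoke Gordan's lemma: since $\overline{C}(T)$ is rational polyhedral with respect to the lattice $\Z T$, the saturation $T^{\reg} = \Z T \cap \overline{C}(T)$ is itself a finitely generated monoid, say with generators $g_1, \ldots, g_N$. Each $g_i \in C(T)$ is a $\Q_{\geq 0}$-combination of the generators $t_1, \ldots, t_r$ of $T$ (solve a rational linear system); clearing a common denominator yields $k \in \N$ with $k g_i \in T$ for all $i$. Setting $F := \{\sum_i r_i g_i : 0 \leq r_i < k\} \subset T^{\reg}$, every $x = \sum m_i g_i \in T^{\reg}$ decomposes via Euclidean division $m_i = k q_i + r_i$ as $x = \sum q_i(k g_i) + \sum r_i g_i \in T + F$, giving $T^{\reg} = T + F$ with $F$ finite.

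\textbf{Part (ii), second claim.} The strategy is to construct a single ``conductor'' element $t_* \in T$ with $t_* + T^{\reg} \subset T$, and then to conclude via a compactness argument on $\sigma$. The sub-lemma is: setting $t_0 := \sum_{i=1}^r t_i \in T \cap \mathring{C}(T)$ and using that $\Z T = \sum_i \Z t_i$ as a group, for any $y \in \Z T$ one writes $y = \sum c_i t_i$ with $c_i \in \Z$ and gets $y + n t_0 = \sum(c_i + n) t_i \in T$ whenever $n \geq \max_i(-c_i)$. Applying this to each $f$ in the finite set $F$ from the first claim and letting $N^*$ dominate the resulting thresholds, the element $t_* := N^* t_0 \in T$ satisfies $t_* + f \in T$ for every $f \in F$. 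For any $x' \in T^{\reg}$, decomposing $x' = t + f$ via the first claim gives $t_* + x' = t + (t_* + f) \in T$, so $t_* + T^{\reg} \subset T$.

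To conclude, observe that for $x \in T^{\reg}$, whenever $x - t_* \in \overline{C}(T)$ one has $x - t_* \in \Z T \cap \overline{C}(T) = T^{\reg}$, hence $x = t_* + (x - t_*) \in T$. Therefore
\[
(T^{\reg} \setminus T) \cap \sigma \subset \bigl\{x \in \sigma : x - t_* \notin \overline{C}(T)\bigr\},
\]
and the right-hand side is bounded: writing points of $\sigma$ as $\lambda b$ with $b$ in the compact basis $B \subset \mathring{C}(T)$, the distance $d(b, \partial C(T))$ is bounded below by some $d_0 > 0$ on $B$, and $\lambda b - t_* \in \overline{C}(T)$ as soon as $\lambda$ exceeds $\|t_*\|/d_0$, uniformly over $B$. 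Discreteness of $\Z T$ upgrades boundedness to finiteness. The main subtle point is producing the \emph{single} element $t_*$, rather than individual thresholds for each element of $T^{\reg}$; the first claim's finite decomposition $T^{\reg} = T + F$ is exactly what lets us apply the sub-lemma to finitely many coset representatives at once.
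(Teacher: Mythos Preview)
Your proof is correct. The paper itself offers only a two-line sketch: part (i) ``directly follows from the elementary fact that $\mathring{C}(S)=\bigcup_T\mathring{C}(T)$'' where $T$ ranges over finitely generated subsemigroups, and part (ii) ``is what the usual proof of Gordan's lemma yields''. You supply the details the paper omits, with minor variations worth noting.

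For (i), your argument is actually more careful than the paper's hint. Compactness alone yields a finitely generated $T\subset S$ with $\sigma\subset\mathring C(T)$, but to conclude $T^{\reg}\cap\sigma=S^{\reg}\cap\sigma$ one also needs $\Z T=\Z S$; you arrange this explicitly by adjoining a finite set of group generators of $\Z S$ drawn from $S$, a step the paper's one-line justification leaves implicit.

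For (ii), you take a slightly different route from what the paper alludes to. The ``usual proof of Gordan's lemma'' writes each $x\in T^{\reg}$ as $x=\sum\lfloor\lambda_i\rfloor t_i + p$ with $p$ in the finite set $\{\sum\mu_i t_i:0\le\mu_i<1\}\cap\Z T$, producing $F$ directly as the lattice points of the fundamental parallelepiped. You instead invoke Gordan's lemma as a black box to get generators $g_i$ of $T^{\reg}$, clear denominators so that $kg_i\in T$, and use Euclidean division in the $g_i$-expansion. Both arguments are equally valid and of comparable length. Your conductor argument for the second assertion---building a single $t_*\in T$ with $t_*+T^{\reg}\subset T$ by applying the sub-lemma to the finitely many elements of $F$, then combining with the compact-basis bound---is a clean way to pass from $T^{\reg}=T+F$ to finiteness of $(T^{\reg}\setminus T)\cap\sigma$, and is precisely the content hidden behind the paper's ``As a result''.
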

The first point directly follows from the elementary fact that
$$
\mathring{C}(S)=\bigcup_{T\subset S}\mathring{C}(T),
$$
where $T$ ranges over all finitely generated subsemigroups of $S$. The second point is what the usual proof of Gordan's lemma yields.

\subsection{A 'Fekete lemma' for subadditive functions on semigroups}
If $(a_k)_{k\in\N}$ is a subadditive sequence of real numbers, then $a_k/k$ admits a limit in $\R\cup\{-\infty\}$. This elementary result,
sometimes known as "Fekete's subadditivity lemma", admits the following generalization.

 \begin{thm}\label{thm:subadd} Let $S\subset V$ be a discrete semigroup and $f:S\to\R$ a subadditive function, so that $f(u+v)\le f(u)+f(v)$ for all $u,v\in S$. Then we have:
\begin{itemize}
\item[(i)] For all $x\in\mathring{C}(S)$ and all sequences $\e_ku_k$ with $\e_k>0$, $u_k\in S$, $\e_k\to 0$ and $\e_k u_k\to x$, the limit
$$
\hat f(x)=\lim_{k\to\infty}\e_k f(x_k)
$$
exists in $\R\cup\{-\infty\}$ and only depends on $x$.

\item[(ii)] We either have $\hat f\equiv-\infty$ on $\mathring{C}(S)$, or $\hat f:\mathring{C}(S)\to\R$ is finite valued, homogeneous and subadditive (and hence convex and continuous). In the latter case, we have $\hat f\le f$ on $S\cap\mathring{C}(S)$, and $\hat f$ is characterized as the largest subadditive and homogeneous function on $\mathring{C}(S)$ with this property. 
\end{itemize}
\end{thm}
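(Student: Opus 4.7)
The plan is to construct $\hat f$ intrinsically on the rational part of $\mathring C(S)$ via the classical one-dimensional Fekete lemma, extend continuously in the finite case, and identify the extension with the limit along arbitrary sequences. The two essential inputs are the scaling behaviour of subadditive sequences along each ray $\N u$, and Proposition~\ref{prop:semi}, which controls how $S$ populates $C(S)$ near an interior point.

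For each fixed $u\in S$, the sequence $k\mapsto f(ku)$ is subadditive, so the classical Fekete lemma produces
$$
\tilde f(u):=\lim_{k\to\infty}\frac{f(ku)}{k}=\inf_{k\ge 1}\frac{f(ku)}{k}\in\R\cup\{-\infty\}.
$$
Passing to the limit in $f(k(u+v))\le f(ku)+f(kv)$ shows that $\tilde f$ is subadditive on $S$, and it is trivially $\N$-homogeneous. Setting $\hat f(u/n):=\tilde f(u)/n$ extends it to a well-defined, $\Q_{>0}$-homogeneous, subadditive function on the rational cone $\Q_{>0}S\cap\mathring C(S)$. The dichotomy then follows by propagation: if $\hat f(x_0)=-\infty$ at some rational interior point $x_0$, any other rational interior $y$ can be written $y=\lambda x_0+z$ with $\lambda\in\Q_{>0}$ small enough that $z\in C(S)\cap\Q_{>0}S$, and subadditivity forces $\hat f(y)\le\lambda\hat f(x_0)+\hat f(z)=-\infty$.

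Assume henceforth that $\hat f$ is finite. Then $\hat f$ is sublinear, hence convex, on the rational interior, and to extend continuously to $\mathring C(S)$ it suffices to show local boundedness above. This is where Proposition~\ref{prop:semi}(i) enters: any compact cone $\sigma\subset\mathring C(S)$ is saturated by a finitely generated $T\subset S$ with generators $w_1,\dots,w_r$, and every rational $x\in\sigma\cap\Q_{>0}T$ admits a representation $x=\sum q_iw_i$ with $q_i\in\Q_{\ge 0}$ and $\sum q_i$ uniformly bounded on $\sigma$, so that $\hat f(x)\le\sum q_if(w_i)$ is uniformly bounded above. Sublinearity then yields two-sided boundedness and continuity, and $\hat f$ extends continuously (sublinearly and $\R_{>0}$-homogeneously by density) to the whole of $\mathring C(S)$.

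It remains to identify this extension with the limit in (i) along an arbitrary sequence $\e_ku_k\to x$, $\e_k\to 0^+$, $u_k\in S$. The lower bound is formal: $\tilde f\le f$ on $S$ together with homogeneity and continuity give $\e_kf(u_k)\ge\e_k\tilde f(u_k)=\hat f(\e_ku_k)\to\hat f(x)$. For the upper bound, I shrink to a compact cone $\sigma$ around $x$ and invoke Proposition~\ref{prop:semi} twice to decompose $u_k=t_k+r_k$ with $t_k\in T$ finitely generated and $r_k$ in a fixed finite set, so that $\e_kf(r_k)\to 0$. Expanding $t_k=\sum a_{ki}w_i$ and extracting a subsequence with $\e_ka_{ki}\to\alpha_i$, subadditivity yields $\limsup_k\e_kf(u_k)\le\sum\alpha_if(w_i)$ for some real decomposition $x=\sum\alpha_iw_i$. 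By shrinking $\sigma$ and enlarging the generators of $T$ to include directions $v$ with $f(\ell v)/\ell$ close to $\tilde f(v)$ and $v/\|v\|$ close to $x/\|x\|$, this upper bound can be driven to $\hat f(x)$, matching the lower bound. The minimality statement in (ii), and the degenerate case $\hat f\equiv-\infty$, follow from the same inequality chain applied to any sublinear candidate $g\le f$ or to sequences with $\tilde f(kv)/k\to-\infty$. The main obstacle is precisely this tightening in the upper bound: subadditivity readily supplies \emph{some} upper bound $\sum\alpha_if(w_i)$, but matching it with $\hat f(x)$ requires simultaneous control over the shrinking cone $\sigma$, the generators of the approximating subsemigroup $T$, and the decomposition of $u_k$ in those generators---and Proposition~\ref{prop:semi} is what makes this joint control tractable.
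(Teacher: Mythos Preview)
Your overall strategy---first defining $\hat f$ along rational rays via the one-dimensional Fekete lemma, extending continuously, and only then identifying it with the limit in (i)---differs from the paper's direct route. Your lower bound for (i) and the construction of the continuous extension in the finite case are fine. The genuine gap is exactly where you flag it: the upper bound.

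You correctly obtain $\limsup_k \e_k f(u_k) \le \sum_i \alpha_i f(w_i)$ for \emph{some} nonnegative decomposition $x = \sum_i \alpha_i w_i$, and then assert that by ``shrinking $\sigma$ and enlarging the generators'' to include some $\ell v$ with $\bar v$ near $x$ and $f(\ell v)/\ell$ near $\hat f(x)$, this can be driven to $\hat f(x)$. But adding $\ell v$ to the generating set does not force the decomposition of $u_k$ to concentrate on it; the remaining generators $w_i$ (needed to span a neighbourhood of $x$) may carry a nontrivial share of the weight, and $f(w_i)/\lambda(w_i)$ need not be close to $\hat f(x)/\lambda(x)$ even when $\bar w_i$ is close to $x$---Step~1 only bounds $\bar f$ from above. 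Shrinking $\sigma$ makes this worse, not better: the tighter the cone, the less leverage you have on the convex-combination weight $t_k$ of the remainder. The same defect undermines your treatment of the $-\infty$ case, since $\sum_i\alpha_i f(w_i)$ is always finite.

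The paper's insight is the opposite of shrinking: it pushes the remainder \emph{away} from $x$. Given any $u'\in S\cap\sigma$, one chooses $z\in\partial K'$ on the ray from $\bar u'$ through $x$, and takes generators $w_i$ spanning a cone $\tau$ near $z$, so that $\bar r_k\in\tau$ is bounded away from $K\ni x$ by $\tfrac12 d(\partial K',K)$. Writing $u_k=m_ku'+r_k$ with $r_k\in S\cap\tau$ and $\bar u_k=(1-t_k)\bar u'+t_k\bar r_k$, the distance bound forces $t_k\le C'\|\bar u_k-\bar u'\|$, and combined with the uniform upper bound on $\bar f$ this yields
$$
\limsup_k \bar f(u_k)\le \bar f(u')+C\|x-\bar u'\|
$$
for \emph{every} $u'\in S\cap\sigma$. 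Applying this with $u'$ taken from a second sequence $u'_k$ gives $\limsup_k\bar f(u_k)\le\liminf_k\bar f(u'_k)$ at once, settling existence, uniqueness, the $-\infty$ case, and Lipschitz continuity simultaneously---without ever constructing $\hat f$ beforehand.
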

As observed in \cite[\S 2]{BL}, such a result is implicit in \cite{Zah} for $S=\N^n\subset V=\R^n$. The general case is due to Witt-Nystr\"om \cite{Nys09}, and we will basically follow his strategy of proof.

\begin{proof} Let $\la\in V^*$ be a non-zero linear form, and consider the affine hyperplane $H:=\{\la=1\}$. For all $x\in V$ with $\la(x)\ne 0$, set $\bar x:=\la(x)^{-1}x$, which belongs to $H$. Similarly, for $u\in S$ with $\la(u)\ne 0$ set $\bar f(u):=\la(u)^{-1}f(u)$.

Let
$$
K\Subset K'\Subset K''\Subset\mathring{C}(S)\cap H
$$
be fixed compact convex sets, and denote by $\sigma$, $\sigma'$ and $\sigma''$ the corresponding cones. Note that $\la>0$ on $\sigma''\setminus\{0\}$. To prove (i), it is enough to show that for each $x\in K$ and each sequence $u_k\in S$ with $\la(u_k)\to+\infty$ and $\bar u_k\to x$, $\bar f(u_k)$ has a limit which only depends on $x$.\\

\noindent{\bf Step 1}. We first prove that $\bar f$ is bounded above on $S\cap\sigma$. Applying Proposition \ref{prop:semi} to the discrete semigroup $S\cap\sigma'$, we find finitely many points $u_i\in S\cap\sigma'$ such that $T:=\sum_i\N u_i$ satisfies
$$
S^{\reg}\cap\sigma=T^{\reg}\cap\sigma,
$$
and $T^{\reg}\setminus T$ meets $\sigma$ in a finite set, say $A$. It is thus enough to show that $\bar f$ is bounded above on $(S\cap\sigma)\setminus A$. Now each $u$ in the latter set belongs to $T$, hence writes $u=\sum_i n_i u_i$ with $n_i\in\N$. By subadditivity of $f$, we get
$$
f(u)\le \sum_i n_i f(u_i)\le C\sum_i n_i\la(u_i)=C\la(u)
$$
with $C>0$ larger than $\max_i \la(u_i)^{-1}f(u_i)$, and we thus see that $\bar f\le C$ on $S\cap\sigma$.\\

\noindent{\bf Step 2}. We prove the existence of $C>0$ such that for all $x\in K$ written as the limit of $\bar u_k$ with $u_k\in S$ and $\la(u_k)\to+\infty$, and for all $u'\in S\cap\sigma$, we have
\begin{equation}\label{equ:lip}
\limsup_{k\to\infty}\bar f(u_k)\le\bar f(u')+C\|x-u'\|.
\end{equation}
Given $x$ and $u'$ as above, let $z\in\partial K'$ be the unique point such that $x\in[\bar u',z]$. Since $z$ is in particular in $\mathring{C}(S)$, there exist finitely many points $w_i\in S\cap\sigma''$ such that $\tau:=\sum_i\R_+ w_i$ is a neighborhood of $z$ with
\begin{equation}\label{equ:dist}
d(\tau,K)\ge\tfrac 1 2d(\partial K',K)>0.
\end{equation}
As a result, $\R_+ u'+\sum_i\R_+ w_i$ is a neighborhood of $x$ contained in $\sigma''$,  and we thus have $u_k\in\N u'+\sum_i\N w_i$ for all $k\gg 1$ by (ii) of Proposition \ref{prop:semi}. We may thus write in particular $u_k=m_k u'+r_k$ with $m_k\in\N$ and $r_k\in S\cap\tau$. As a consequence,
$$
t_k:=\frac{\la(r_k)}{\la(u_k)}
$$
belongs to $[0,1]$, and we have
$$
\bar u_k=(1-t_k)\bar u' + t_k\bar r_k
$$
and
$$
\bar f(u_k)\le(1-t_k)\bar f(u')+t_k\bar f(r_k)
$$
by subadditivity of $f$. Applying Step 1 to $K''$ in place of $K$ yields $C>0$ such that $\bar f\le C$ on $S\cap\sigma''$, and we get
$$
\bar f(u_k)-\bar f(u')\le C t_k=C\frac{\|\bar u_k-\bar v\|}{\|\bar r_k-\bar v\|}\le 2C d(\partial K',K)^{-1}\|\bar u_k-\bar v\|
$$
for all $k\gg 1$. This proves (\ref{equ:lip}).\\

\noindent{Step 3}. Let $x\in K$ and let $u_k,u'_k\in S$ be two sequences such that $\la(u_k),\la(u'_k)\to+\infty$ and $\bar u_k,\bar u'_k\to x$.  By (\ref{equ:lip}) we get $\limsup_k\bar f(u_k)\le\liminf_k\bar f(u'_k)$, which proves that $\hat f(x):=\lim_k\bar f(u_k)$ exists in $\R\cup\{-\infty\}$ and only depends on $x$. Another application of (\ref{equ:lip}) shows that $|\hat f(x)-\hat f(x')|\le C\|x-x'\|$ for all $x,x'\in K$, which proves that $\hat f$ is finite valued and continuous on $K$ as soon as there exists $x\in K$ with $\hat f(x)>-\infty$. In that case, subadditivity of $\hat f$ easily follows from that of $f$, and homogeneity of $\hat f$ is automatic, so that $\hat f$ is convex. Given $u\in S\in \mathring{C}(S)$ we have 
$$
\hat f(u)=\lim_{k\to\infty}\tfrac 1 k f(k u)\le f(u)
$$
by subadditivity of $f$. Conversely, if $g$ is a convex and homogeneous function on $\mathring{C}(S)$ such that $g\le f$ on $S\cap\mathring{C}(S)$, writing $x\in\mathring{C}(S)$ as the limit of $\e_k u_k$ with $\e_k\to 0$ and $u_k\in S\in\mathring{C}(S)$ yields
$$
g(u)=\lim_{k\to\infty}\e_k g(u_k)\le\lim_{k\to\infty}\e_k f(u_k)=\hat f(x),
$$
and Theorem \ref{thm:subadd} is proved.

\end{proof}



\bigskip \small

\bigskip\noindent
S{\'e}bastien Boucksom,
CNRS--Universit{\'e} Paris 6, Institut de Math{\'e}matiques, F-75251 Paris Cedex 05, France

\nopagebreak\noindent
\textit{E-mail address:} \texttt{boucksom@math.jussieu.fr}

\bigskip\noindent
   Alex K\"uronya,
   Budapest University of Technology and Economics,
   Mathematical Institute, Department of Algebra,
   Pf. 91, H-1521 Budapest, Hungary.

\nopagebreak\noindent
   \textit{E-mail address:} \texttt{alex.kuronya@math.bme.hu}

\medskip\noindent
   \textit{Current address:}
   Alex K\"uronya,
   Albert-Ludwigs-Universit\"at Freiburg,
   Mathematisches Institut,
   Eckerstra{\ss}e 1,
   D-79104 Freiburg,
   Germany.

\bigskip\noindent
   Catriona Maclean,
   Institut Fourier, CNRS UMR 5582   Universit\'e de Grenoble,
   100 rue des Maths,
   F-38402 Saint-Martin d'H\'eres cedex,  France

\nopagebreak\noindent
   \textit{E-mail address:} \texttt{catriona.maclean@ujf-grenoble.fr}

\bigskip\noindent
   Tomasz Szemberg,
   Instytut Matematyki UP,
   Podchor\c a\.zych 2,
   PL-30-084 Krak\'ow, Poland.

\nopagebreak\noindent
   \textit{E-mail address:} \texttt{tomasz.szemberg@uni-due.de}


\end{document}